\documentclass[12pt]{amsart}
\usepackage[all]{xy}
\usepackage{amssymb}
\usepackage{amsthm}
\usepackage{amsmath}
\usepackage{amscd,enumitem}
\usepackage{verbatim}
\usepackage{eurosym}
\usepackage{graphicx}
\usepackage{dsfont}
\usepackage{stmaryrd}
\usepackage{color}
\usepackage{float}
\usepackage{longtable}
\usepackage{dcolumn}
\usepackage[mathscr]{eucal}
\usepackage[all]{xy}
\usepackage{hyperref}
\usepackage[title]{appendix}
\usepackage[usenames,dvipsnames]{xcolor}
\usepackage{bbm}
\usepackage[textheight=8.85in, textwidth=6.8in]{geometry}
\usepackage{multirow}
\usepackage{caption}
\theoremstyle{plain}
\newtheorem{theorem}{Theorem}[section]

\newtheorem{lemma}[theorem]{Lemma}

\theoremstyle{definition}
\newtheorem{definition}[theorem]{Definition}

\theoremstyle{remark}

\newcommand{\Z}{\mathbb{Z}}
\newcommand{\Q}{\mathbb{Q}}

\newcommand{\F}{\mathbb{F}}

\newcommand{\hes}{\text{{\rm{Hes}}}}
\newcommand{\jac}{\text{{\rm{Jac}}}}
\newcommand{\dik}{\text{{\rm{DIK}}}}

\newcommand{\R}{\mathbb{R}}

\newcommand{\C}{\mathbb{C}}

\catcode`,\active

\catcode`\,12

\numberwithin{equation}{section}

\begin{document}
	\title[$p$-adic hypergeometric functions and Elliptic curves]{Weighted averages of $p$-adic hypergeometric functions and traces of Frobenius of elliptic curves}
		\author{Riya Mandal and Neelam Saikia}

	\address{School of Basic Sciences, Indian Institute of Technology Bhubaneswar, Argul, Khordha 752050, Odisha, India.}
	\email{a23ma09010@iitbbs.ac.in}

	\address{School of Basic Sciences, Indian Institute of Technology Bhubaneswar, Argul, Khordha 752050, Odisha, India.}
	\email{neelamsaikia@iitbbs.ac.in}

	\keywords{Elliptic curves; $p$-adic gamma functions; hypergeometric functions.}
	\subjclass[2000]{11G20, 11T24, 33E50.}
	
	\begin{abstract}
		In this paper, we aim to study traces of Frobenius of certain one parameter families of elliptic curves and their relationships with $p$-adic hypergeometric functions. For example, we consider a DIK family of curves and establish the trace of Frobenius as weighted averages of special values of certain families of $p$-adic hypegeometric functions, where the average is taken over the arrays of parameters. Moreover, we consider Jacobi curves and express the trace of Frobenius as a special values of $p$-adic hypergeomtric functions. As a consequence of these results we obtain four summation identities for the $p$-adic hypegeometric functions that arise from the DIK family. Furthermore, we obtain $p$-adic analogous of Euler and Pfaff transformations for certain $p$-adic hypergemetric functions.
	
         \end{abstract}
	\maketitle
	\section{Introduction and Statement of results}
	Let $p$ be an odd prime and $\F_p$ be a finite field with $p$ elements. In 1980's, Greene \cite{greene} introduced a class of functions defined over finite fields known as Gaussian hypergeometric functions which can be described as finite field analogues of classical hypergeometric series.  
	He defined these functions using Jacobi sums of multiplicative characters over finite fields $\F_p.$ Furthermore, he established that these functions satisfy many transformation and special identities as satisfied by the classical hypergeometric series. Similar type of questions were also studied by other authors for Greene's hypergeometric functions. For more details, see \cite{Evan-Greene1, Evan-Greene2, 
	Greene-Stanton} etc. These hypergeometric functions often arise in the study of arithmetic of elliptic curves, Galois representations, fourier coefficients of modular forms, Kloosterman sums etc. For example, Ono \cite{ono} derived that the trace of Frobenius of certain families of elliptic curves over finite fields can be expressed as special values of Gaussian hypergeometric functions. In this direction, further results can be found in \cite{ahlgren-ono, AO, B-K, fop, Fuselier, FuselierHecke, Lennon1, Lennon, Long, MP, Rouse, TY}. By definition of Gaussian hypergeometric function the parameters are multiplicative characters of finite field. To facilitate the existence of multiplicative characters of order higher than two over a finite field $\F_p,$ it is necessary that the prime $p$ has to satisfy certain congruence. Therefore, the results involving Gaussian hypergeometric functions with higher order characters as parameters are certainly limited to certain congruence conditions.  For example, see \cite{Fuselier, FuselierHecke, Lennon1, Lennon}. To overcome this restriction McCarthy \cite{McCarthyPJM} introduced a new version of hypergeometric functions over finite field using $p$-adic gamma functions extending Gaussian hypergeometric functions in the $p$-adic setting. McCarthy, also derived that special values of this hypergeometric function in the $p$-adic setting correspond to traces of Frobenius of elliptic curves for all but finitely many primes $p.$ In \cite{BS, BS1, BSM, Barman-Sul2}, the authors developed arithmetic of further families of hypergeometric functions in the $p$-adic setting. Further results in this direction relating fourier coefficients of modular forms can be found in \cite{FM, Pujahari-Saikia, Barman-Sul}. Recently, Pujahari and the second author \cite{Pujahari-Saikia2}, investigated statistical questions determining limiting distributions of certain families of hypergeometric functions in the $p$-adic setting as $p\rightarrow\infty$ confirming semi circular or Sato-Tate distribution for these families. This result is a refinement of the result of Birch \cite{Birch} that confirm Sato-Tate distribution for families of elliptic curves over finite fields in the vertical setting. We now fix some notation to recall hypergeometric function in the $p$-adic setting. Let $\Q_p$ be the field of $p$-adic numbers and $\Z_p$ be the ring of $p$-adic integers. For $x\in\R,$ $\lfloor x \rfloor$ denotes the greatest integer function and $\langle x\rangle$ denotes the fractional part of $x$ satisfying $0\leq x<1.$ Let $\omega$ denote the Teichm\"{u}ller character of $\F_p$ such that $\omega (a)\equiv a\pmod{p}$ and 
	$\overline{\omega}$ denote the multiplicative inverse of $\omega.$ Furthermore, let $\Gamma_p(\cdot)$ denote the Morita's $p$-adic gamma function (see Section 2 for more details). With these notation, we now recall the following hypergeometric function which is known as $p$-adic hypergeometric function:
	\begin{definition}\cite[Definition 1.1]{McCarthyPJM} 
	 Let $p$ be an odd prime and let $t\in \mathbb{F}_p$. For $n\in\mathbb{Z}^+$ and $1\leq i\leq n,$ let 
    $a_i,b_i \in \mathbb{Q}\cap \mathbb{Z}_p $. Then define
    \begin{align*}
        {_nG_n}\left[\begin{array}{cc}
        a_1,    a_2, \dots,a_n   \\ b_1,   b_2, \dots,b_n
       \end{array}|t\right]_p :&=\frac{-1}{(p-1)} \sum_{j=0}^{p-2}(-1)^{jn}\overline{\omega}^{j}(t)\\& \prod_{i=1}^{n}(-p)^{-\lfloor\langle a_i\rangle-\frac{j}{p-1}\rfloor-\lfloor\langle-b_i\rangle+\frac{j}{p-1}\rfloor}
\frac{\Gamma_p(\langle a_i-\frac{j}{p-1}\rangle)}{\Gamma_p(\langle a_i\rangle)}\frac{\Gamma_p(\langle-b_i+\frac{j}{p-1}\rangle )}{\Gamma_p(\langle-b_i\rangle)} .
        \end{align*}
	\end{definition}
	In this paper, our goal is to explore one parameter families of elliptic curves and investigate the associated families of hypergeometric functions. For example, we consider the following family of elliptic curve: For $\lambda\in\Q$ such that $\lambda\neq0,\frac{9}{4},$ let 
	$$
	 E^{\dik}_{\lambda} : y^2 = x^3 + 3\lambda(x + 1)^2 
	  $$ 
	  be the DIK elliptic curve with discriminant, $\Delta(E_{\lambda}^{\dik})=432\lambda^2(4\lambda-9)$ and $j$-invariant, $j(E_{\lambda}^{\dik})=\frac{4\lambda\cdot1728(\lambda-2)^3}{4\lambda-9}.$
	For each prime $p$ of good reduction, let $E_{\lambda}^{\dik}(\F_p)$ denote the set of $\F_p$-rational points of the curve $E_{\lambda}^{\dik}$ after reduction modulo $p$ including the point at infinity and $a_p(E_{\lambda}^{\dik}):=p+1-|E_{\lambda}^{\dik}(\F_p)|$ be its trace of Frobenius, where $|E_{\lambda}^{\dik}(\F_p)|$ denote the number of elements present in the set $E_{\lambda}^{\dik}(\F_p).$ Furthermore, we consider Jacobi and Hessian curves defined as follows, respectively: For $\lambda\neq\pm1,0,$ let $$E_{\lambda}^{\jac}:y^2=x^4 + 2\lambda x^2 + 1$$ be the Jacobi curve with discriminant $\Delta(E^{\jac}_{\lambda})=256(\lambda^2-1)^2$ and for $d^3\neq1,$ let
	$$
	E^{\hes}_d: x^3+y^3+1=3dxy
	$$ 
	be the Hessian elliptic curve with $\Delta(E^{\hes}_d)=27(1-d^3)^3.$ Furthermore, let $a_p(E^{\jac}_\lambda):=p+1-|E^{\jac}_\lambda(\F_p)|$ be the trace of Frobenius of the curve $E^{\jac}_\lambda$ and $a_p(E^{\hes}_d):=p+1-|E^{\hes}_d(\F_p)|$ be the trace of Frobenius of the curve $E^{\hes}_d,$ where $|E^{\jac}_\lambda(\F_p)|$ and $|E^{\hes}_d(\F_p)|$ denote the number of $\F_p$-points of the respective curves after reduction modulo $p$ including the points at infinity. To this end we now define certain families of $p$-adic hypergeometric functions: For $0\leq r\leq p-2$ and $t\in\F_p,$ let 
	$${_2G_2}(r,t)_p:={_2G_2}\left[\begin{array}{cc}
  \frac{r}{p-1}   & -\frac{1}{2} -\frac{r}{p-1}  \\
  \frac{-r}{2(p-1)}    & -\frac{r}{2(p-1)} -\frac{1}{2} \end{array}| t \right]_p$$
  and for $0\leq s\leq p-2,$ let $$\tilde{_2G_2}(s,t)_p:={_2G_2}\left[\begin{array}{cc}
  0   & -\frac{1}{2} -\frac{s}{p-1}  \\
  \frac{s}{(p-1)}    & -\frac{2s}{(p-1)} \end{array}|\ \frac{1}{6t} \right]_p.$$
	Using these notation, we state our first result expressing the weighted average of the $p$-adic hypergeometric function ${_2G_2}(r,1)_p$ as $r$ varying in terms of the trace of Frobenius of the DIK curve. 
	
	\begin{theorem}\label{theorem-DIK-1}
For a prime $p>3$ and $\lambda\neq0,\frac{9}{4}$ we have 
	\begin{align}
\frac{p^2\phi(-3\lambda)}{(p-1)} \sum_{r=1}^{p-2}{\omega}^r(6\lambda)  \binom{\overline{\omega}^r}{\overline{\omega}^r\phi}
{_2G_2}(r,1)_p=\frac{\phi(3\lambda)}{(p-1)} + a_p(E_{\lambda}^{\dik}).\notag
   \end{align}
	\end{theorem}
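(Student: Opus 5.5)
Our plan is to count points on $E_\lambda^{\dik}$ with character sums, expand into Gauss sums, and then convert Gauss sums into $p$-adic gamma values via the Gross--Koblitz formula. The binomial $\binom{\overline{\omega}^r}{\overline{\omega}^r\phi}$ is Greene's normalized Jacobi sum, so the Gauss-sum expression must keep exactly one Jacobi sum of that shape, indexed by $r$.

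First we write the trace as a character sum. For $p>3$,
\[
a_p(E_\lambda^{\dik})=-\sum_{x\in\F_p}\phi\bigl(x^3+3\lambda(x+1)^2\bigr).
\]
Substituting $x\mapsto x-1$ turns the cubic into $(x-1)^3+3\lambda x^2$. Next we express $\phi(u)$ through the additive-character identity $\sum_{z}\theta(zu)=p\,\delta_{u=0}$, combined with $\sum_y\phi(y)\theta(y\cdot)$. Alternatively, we can use the standard trick of writing $\sum_x\phi(f(x))$ as $\sum_{x,y}\frac{1}{p}\sum_z\theta(z(f(x)-y^2))$ and expanding each $\theta$ in Gauss sums, $\theta(a)=\frac{1}{p-1}\sum_{j}g(\overline{\omega}^j)\omega^j(a)$, in the style of McCarthy. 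We keep the term $3\lambda x^2$ separated, introducing one summation index $r$ attached to $\omega^r(3\lambda)$, or to $\omega^r(6\lambda)$ after normalization. The sum over $x$ of $(x-1)^3$ against characters then produces Jacobi sums. In the inner variable we apply the quadratic change of variables coming from completing the square in $y$, and the Davenport--Hasse relation for $\chi^2$. This is exactly where the characters $\overline{\omega}^{r/2}$-type parameters arise: we apply Davenport--Hasse to $g(\chi^2)$, which yields the denominators $\frac{-r}{2(p-1)}$ and $-\frac{r}{2(p-1)}-\frac12$.

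After reducing, we expect the trace to take the form
\[
\frac{1}{p-1}\sum_{r}\omega^r(6\lambda)\,J(\cdot,\cdot)\sum_j \frac{g(\cdot)g(\cdot)g(\cdot)g(\cdot)}{\cdots}.
\]
We then recognize $\binom{\overline{\omega}^r}{\overline{\omega}^r\phi}=\frac{\phi(-1)}{p}J(\overline{\omega}^r,\phi\omega^{r})$ and set it aside. For the remaining inner $j$-sum we apply Gross--Koblitz, $g(\overline{\omega}^j)=-\pi^j\Gamma_p(\frac{j}{p-1})$, together with the multiplication formula for $\Gamma_p$ used for the Davenport--Hasse step. This should identify the inner sum with ${_2G_2}(r,1)_p$, multiplied by powers of $p$ and $\phi(-3\lambda)$; the $p$-power bookkeeping produces the prefactor $p^2$.

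The terms with $r=0$, and the degenerate $j$-values where characters become trivial, must be separated out and evaluated directly. They should account for the correction $\frac{\phi(3\lambda)}{p-1}$ and for the restriction of the sum to $1\le r\le p-2$. The main obstacle is this bookkeeping. We must track the $(-p)^{-\lfloor\cdot\rfloor}$ exponents in Gross--Koblitz when fractional parts wrap around, i.e.\ the cases where $\frac{j}{p-1}$ exceeds $\langle a_i\rangle$. We also need the half-integer indices to be handled consistently: $r$ even versus odd, with $\phi$ absorbing the ambiguity. We plan to split according to the parity of $r$ and check that the two cases combine uniformly into the stated formula.
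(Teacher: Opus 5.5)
\textbf{Gap: the double-sum shape is never produced.} Your route does not yield the weighted-average form of the left-hand side; the computation you sketch leads to a different shape. After $x\mapsto x-1$, suppose you expand only the two blocks $\theta(-z(x-1)^3)$ and $\theta(-3\lambda z x^2)$, with indices $a$ and $r$, and sum over $y$ to get $g(\phi)\phi(z)$. Then the $z$-sum forces $a\equiv -r+\frac{p-1}{2}$. The $x$-sum becomes a single Jacobi sum $J(\chi^{2r},\phi\overline{\chi}^{3r})$, up to sign and the boundary terms $x\in\{0,1\}$.

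What remains is a \emph{single} sum over $r$ of $g(\overline{\chi}^r)g(\phi\chi^r)\chi^r(3\lambda)J(\chi^{2r},\phi\overline{\chi}^{3r})$. After Gross--Koblitz this naturally gives one hypergeometric function with $\lambda$ in its argument, the kind of formula McCarthy's theorem supplies in the proof of Theorem~\ref{theorem-summation-1}. It does not give $\sum_r\omega^r(6\lambda)\binom{\overline{\omega}^r}{\overline{\omega}^r\phi}{}_2G_2(r,1)_p$, in which each ${}_2G_2(r,1)_p$ is itself a sum over a second index at argument $1$.

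Your ``expected form'' with an inner $j$-sum is asserted, not derived. Getting there from your single sum would require a summation identity at argument $1$, and the proposal does not contain one. Two symptoms confirm the mismatch:
\begin{enumerate}
\item The Jacobi sum your $x$-sum produces is $J(\chi^{2r},\phi\overline{\chi}^{3r})$, not $J(\overline{\omega}^r,\phi\omega^r)$. So the binomial coefficient does not arise there.
\item Davenport--Hasse on $g(\chi^{2r})$ gives parameters $\frac{r}{p-1}$ and $\frac{r}{p-1}+\frac12$, not the half-indices $\frac{r}{2(p-1)}$ that appear in ${}_2G_2(r,1)_p$.
\end{enumerate}

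\textbf{The missing idea: do not collapse the $x$-sum.} The paper expands each of the four monomials of $x^3+3\lambda x^2+6\lambda x+3\lambda$ separately. Orthogonality in $x$ and $z$ then leaves two free indices $r,s$, and $\lambda$ enters only through $\overline{\chi}^r(3\lambda)$. After Gross--Koblitz and the shift $s\mapsto s-r$, the multiplication formula \eqref{prod-2} is applied to $\Gamma_p(\langle\frac{r+2s}{p-1}\rangle)$. This step is the source of $\frac{r}{2(p-1)}+\frac{s}{p-1}$ and $\frac{r}{2(p-1)}+\frac{s}{p-1}+\frac12$. It works uniformly for odd $r$, where $\overline{\omega}^{r+2s}$ is not the square of any character, so your planned parity split via Davenport--Hasse could not handle odd $r$ at the level of characters.

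The binomial coefficient is then a normalizing constant. To put the $s$-sum in McCarthy's normalized form, one divides by the $s=0$ values:
\begin{enumerate}
\item By \eqref{relation-1}, the product $\Gamma_p(\langle\frac{r}{2(p-1)}\rangle)\Gamma_p(\langle\frac{r}{2(p-1)}+\frac12\rangle)\Gamma_p(\langle-\frac{r}{p-1}-\frac12\rangle)$ equals a multiple of $\omega^r(-2)\binom{\overline{\omega}^r}{\overline{\omega}^r\phi}$.
\item The remaining factor $\Gamma_p(\langle\frac{r}{p-1}\rangle)$ pairs with $\Gamma_p(\langle\frac{-r}{p-1}\rangle)$ via \eqref{prod-1} to give a sign.
\end{enumerate}
Finally, the $r=0$ slice is a full $s$-sum, evaluated by Lemma~\ref{relation-2}. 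Together with the $x=0$ contribution $1+p\phi(3\lambda)$, it produces the correction $\frac{\phi(3\lambda)}{p-1}$.
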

	In the next result, we express the weighted average of the $p$-adic hypergeometric function $\tilde{_2G_2}(s,t)_p$ as $s$ varying in terms of the trace of Frobenius of the DIK curve. 
	\begin{theorem}\label{theorem-DIK-2}
	For a prime $p>3$ and $\lambda\neq0,\frac{9}{4}$ we have 
	\begin{align}
	p\sum\limits_{s=1,\ s\neq\frac{p-1}{2}}^{p-2}\tilde{_2G_2}(s,\lambda)_p+(p+1)+p \phi(1-6\lambda)(1+\phi(-6\lambda))=(1-p)\cdot\phi(3\lambda)\cdot a_p(E_{\lambda}^{\dik}).\notag
  \end{align}
 	\end{theorem}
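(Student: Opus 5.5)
We will compute $a_p(E_\lambda^{\dik})$ directly as a character sum, expand it with Gauss sums, and then recognise the resulting Gauss-sum expression as a sum over $s$ of the functions $\tilde{_2G_2}(s,\lambda)_p$ via the Gross--Koblitz formula. Since $\tilde{_2G_2}$ has lower parameters $s/(p-1)$ and $-2s/(p-1)$, we will organise the computation so that a character $\chi=\overline{\omega}^{s}$ appears squared. This suggests counting in a form where $x$ or $x+1$ is raised to a power tied to $\chi^2$.

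\textbf{Step 1: character sum.} Write
\[
a_p(E_\lambda^{\dik})=-\sum_{x\in\F_p}\phi\bigl(x^3+3\lambda(x+1)^2\bigr).
\]
Factor out $3\lambda(x+1)^2$ when $x\neq-1$; the point $x=-1$ contributes $\phi(-1)$. This gives
\[
a_p=-\phi(-1)-\phi(3\lambda)\sum_{x\neq -1}\phi\Bigl(1+\frac{x^3}{3\lambda(x+1)^2}\Bigr).
\]
The prefactor $\phi(3\lambda)$ on the right-hand side of Theorem~\ref{theorem-DIK-2} suggests exactly this normalisation.

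\textbf{Step 2: Gauss-sum expansion.} Use the standard identity $\phi(1+u)=\frac{1}{p-1}\sum_{\chi}\frac{g(\phi\chi)g(\overline\chi)}{g(\phi)}\chi(-u)$ for $u\neq0$, or equivalently the binomial expansion from Section 2. This turns the sum into
\[
\sum_\chi (\text{Gauss factor})\,\overline\chi(3\lambda)\sum_x \chi^3(x)\overline\chi^2(x+1).
\]
The inner sum is a Jacobi sum $J(\chi^3,\overline\chi^2)$. Expand it as $g(\chi^3)g(\overline\chi^2)/g(\chi)$, with care at the degenerate characters where it must be corrected.

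Next apply the Davenport--Hasse product formula (for multiplier 3 and multiplier 2) to $g(\chi^3)$ and $g(\overline\chi^2)$. This is where the $6$ in $\frac{1}{6t}$ comes from: the constants $\chi(27)$ and $\chi(4)$ combine with $\overline\chi(3\lambda)$. The result reduces each term to a product of Gauss sums of characters of the shape $\overline\omega^{j}$ shifted by $\phi$. This matches the structure of the $_2G_2$ parameters $0,\ -\tfrac12-\tfrac{s}{p-1}$ over $\tfrac{s}{p-1},\ -\tfrac{2s}{p-1}$.

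\textbf{Step 3: identify with $\tilde{_2G_2}$.} Reindex so that one character index $s$ is held fixed while the other index $j$ runs over the summation in McCarthy's definition. Then convert each Gauss sum to $\Gamma_p$ values by Gross--Koblitz, tracking the powers of $-p$ through the floor terms.

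\textbf{Step 4: exceptional terms.} The excluded values $s=0$ and $s=\frac{p-1}{2}$ are the cases where the Jacobi sums or the Davenport--Hasse products degenerate (trivial $\chi$, $\chi^2=\varepsilon$, or $\chi^3=\varepsilon$). These must be evaluated by hand. We expect them, together with the $x=-1$ term, to produce the correction $(p+1)+p\,\phi(1-6\lambda)(1+\phi(-6\lambda))$ and the overall factor $(1-p)$.

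\textbf{Main obstacle.} The hardest part is the bookkeeping in Step 3. We must match the double sum over $(s,j)$ exactly with $\sum_s\tilde{_2G_2}(s,\lambda)_p$, including:
\begin{itemize}
\item the $p$-power normalisations from the floor functions,
\item the signs $(-1)^{j}$,
\item the $\Gamma_p$ normalising denominators, which depend on $s$ and must be absorbed into the Gauss sums.
\end{itemize}
We will first fix the convention $\chi=\overline\omega^{s}$, compute both sides for a single term, and then choose the reindexing to make them agree. After that, the boundary cases in Step 4 should account for the explicit constant terms.
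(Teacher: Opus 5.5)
\textbf{There is a genuine gap in Steps 2--3: your expansion has only one character index.} Once you expand $\phi(1+u)$ over $\chi$, the $x$-sum becomes the Jacobi sum $J(\chi^3,\overline\chi^2)$, and nothing else is left to sum. Carried through Gross--Koblitz and the multiplication formula, your expression therefore becomes a constant plus a multiple of a \emph{single} value of a ${}_2G_2$. After Hasse--Davenport for $g(\overline\chi^2)$, the net Gauss-sum content is $g(\overline\chi)^2g(\chi^3)/g(\chi)$, i.e.\ upper parameters $\frac13,\frac23$ over $0,0$. The argument is built from $\overline\chi(3\lambda)\chi(27)\overline\chi(4)$, so it is essentially $\frac{4\lambda}{9}$ up to sign and inversion, not $\frac{1}{6\lambda}$.

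Two consequences follow. Your claim that $27$ and $4$ ``combine with $3\lambda$ to give the $6$'' is false. And Step 3 has nothing to act on: there is no second index to hold fixed while $j$ runs. The quantity $\sum_s\tilde{{}_2G_2}(s,\lambda)_p$ is a genuine double sum over $(s,j)$ weighted by $\omega^j(6\lambda)$. To reach it from your single sum you would need an extra summation theorem: expand each $\tilde{{}_2G_2}(s,\lambda)_p$ back into Gauss sums, evaluate the inner $s$-sum as a Gauss-sum convolution by orthogonality, then use Hasse--Davenport to turn $6\lambda$ into $\frac{4\lambda}{9}$. That is essentially the content of Theorem~\ref{theorem-summation-2}, not bookkeeping, and your plan does not supply it.

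Two secondary points. Hasse--Davenport with $m=3$ needs a cubic character, so for $p\equiv 2\pmod 3$ you must use \eqref{prod-3} after Gross--Koblitz. Also, the correct expansion is $\phi(1+u)=\frac{1}{p-1}\sum_\chi\frac{g(\phi\chi)g(\overline\chi)}{g(\phi)}\chi(u)$, with $\chi(u)$ rather than $\chi(-u)$.

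\textbf{The missing idea is a two-parameter expansion in which $6\lambda$ appears directly.} The paper writes $p(|E^{\dik}_\lambda(\F_p)|-1)=\sum_{x,y,z}\theta(zP(x,y))$. It expands the monomials $zx^3$, $3\lambda zx^2$, $6\lambda zx$, $3\lambda z$ separately by Lemma~\ref{lemma-fuselier}, with the $y$-sum contributing $g(\phi)\phi(z)$. The two orthogonality conditions leave two free indices, giving the double sum \eqref{eq-connect} with weight $\omega^{r+2s}(2)\omega^r(3\lambda)=\omega^r(6\lambda)\omega^s(4)$. So the $6\lambda$ is simply the coefficient of $x$ in $3\lambda(x+1)^2$.

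In \eqref{eq-connect}, $r$ is McCarthy's summation index and $s$ is the family parameter. The four $\Gamma_p$ factors are exactly those of $\tilde{{}_2G_2}(s,\lambda)_p$ at $j=r$. Lemma~\ref{lemma-100} evaluates the product of the normalising denominators $\Gamma_p(\langle-\frac12-\frac{s}{p-1}\rangle)\Gamma_p(\langle\frac{-s}{p-1}\rangle)\Gamma_p(\langle\frac{2s}{p-1}\rangle)$ as $-\Gamma_p(\frac12)\overline\omega^s(4)$, which absorbs $\omega^s(4)$. The floor identity for $\lfloor\frac{2s+r}{p-1}\rfloor$ and Lemma~\ref{exponent-2} fix the powers of $-p$.

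The excluded values $s=0$ and $s=\frac{p-1}{2}$ are where Lemma~\ref{lemma-100} fails, not where some Jacobi sum of yours degenerates. Evaluated separately, together with $S_1$, they produce the constants $(p+1)$ and $p\,\phi(1-6\lambda)(1+\phi(-6\lambda))$.
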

	
	In the next two theorems, we provide two summation formulas for the functions ${_2G_2}(r,t)_p$ and $\tilde{_2G_2}(s,t)_p$ as a special value of a $p$-adic hypergeometric functions. 
	
	\begin{theorem}\label{theorem-summation-1}
	Let  $p>3$ be a prime and $\lambda\in \mathbb{F}_p$ such that $\lambda\neq0,2,\frac{9}{4},\frac{3\pm\sqrt{3}}{2}.$ Then we have 
	\begin{align} 
	&\frac{p^2}{(p-1)} \sum_{r=1}^{p-2}{\omega}^r(6\lambda)  \binom{\overline{\omega}^r}{\overline{\omega}^r\phi}{_2G_2}(r,1)_p\notag\\&= \frac{\phi(-1)}{p-1}
  +p\cdot\phi((-3)(2\lambda^2-6\lambda+3)) \cdot{_2G_2}\left[\begin{array}{cc}
  \frac{1}{4}   & \frac{3}{4}  \\
  \frac{1}{3}    & \frac{2}{3}  \end{array}|\frac{(2\lambda^2-6\lambda+3)^2}{4\lambda(\lambda-2)^3}\right]_p.\notag
   \end{align}
	\end{theorem}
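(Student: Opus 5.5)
The plan is to combine Theorem \ref{theorem-DIK-1} with a known expression of the trace of Frobenius of an elliptic curve in Weierstrass form as a ${_2G_2}\left[\begin{smallmatrix}1/4 & 3/4\\ 1/3 & 2/3\end{smallmatrix}\right]$ value. The relevant input is McCarthy's result \cite{McCarthyPJM}: for an elliptic curve $E: y^2=x^3+ax+b$ over $\F_p$, $p>3$, with $j(E)\neq 0,1728$,
\begin{align*}
a_p(E)=\phi(b)\cdot p\cdot {_2G_2}\left[\begin{array}{cc}\frac14 & \frac34\\ \frac13 & \frac23\end{array}\Big|\, -\frac{27b^2}{4a^3}\right]_p.
\end{align*}
Theorem \ref{theorem-DIK-1} gives
\begin{align*}
\frac{p^2}{p-1}\sum_{r=1}^{p-2}\omega^r(6\lambda)\binom{\overline{\omega}^r}{\overline{\omega}^r\phi}{_2G_2}(r,1)_p=\phi(-3\lambda)\left(\frac{\phi(3\lambda)}{p-1}+a_p(E^{\dik}_\lambda)\right).
\end{align*}
Since $\phi(-3\lambda)\phi(3\lambda)=\phi(-1)\phi(9\lambda^2)=\phi(-1)$, the first term is already $\phi(-1)/(p-1)$. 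It therefore remains to show
\begin{align*}
\phi(-3\lambda)\,a_p(E^{\dik}_\lambda)=p\,\phi(-3(2\lambda^2-6\lambda+3))\,{_2G_2}[\cdots].
\end{align*}

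To get there, I first put $E^{\dik}_\lambda$ into short Weierstrass form. Expanding gives $y^2=x^3+3\lambda x^2+6\lambda x+3\lambda$, and the substitution $x\mapsto x-\lambda$ removes the quadratic term. This yields $y^2=x^3+Ax+B$ with
\begin{align*}
A=6\lambda-3\lambda^2=-3\lambda(\lambda-2),\qquad B=2\lambda^3-6\lambda^2+3\lambda=\lambda(2\lambda^2-6\lambda+3).
\end{align*}
This substitution preserves the point count, so $a_p$ is unchanged. The hypotheses of Theorem \ref{theorem-summation-1} are exactly what McCarthy's formula needs. The conditions $\lambda\neq 2$ and $\lambda\ne\frac{3\pm\sqrt3}{2}$ give $A\neq0$ and $B\neq0$, i.e.\ $j\neq0,1728$. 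The conditions $\lambda\neq 0,\frac94$ ensure good reduction.

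Next I compute the argument:
\begin{align*}
-\frac{27B^2}{4A^3}=-\frac{27\lambda^2(2\lambda^2-6\lambda+3)^2}{4\cdot(-27)\lambda^3(\lambda-2)^3}=\frac{(2\lambda^2-6\lambda+3)^2}{4\lambda(\lambda-2)^3},
\end{align*}
which matches the statement. For the character, $\phi(-3\lambda)\phi(B)=\phi(-3\lambda^2(2\lambda^2-6\lambda+3))=\phi(-3(2\lambda^2-6\lambda+3))$. Multiplying McCarthy's identity by $\phi(-3\lambda)$ then gives exactly the required term, and substituting back completes the proof.

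The main obstacle is making sure the quoted form of McCarthy's theorem uses precisely the normalization above: the factor $\phi(b)$, the factor $p$, and the argument $-27b^2/(4a^3)$. I also need to confirm its hypotheses, namely $p>3$ and $j\neq0,1728$, or equivalently $a,b\neq0$. If the normalization differed, say with the argument inverted, I would instead apply the standard $p$-adic relation between the $t$ and $1/t$ values for these parameters. Everything else is an algebraic identity checked directly over $\F_p$.
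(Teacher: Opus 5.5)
Your proposal is correct and follows the paper's own proof: shift $x$ by $\lambda$ to reach $y^2=x^3+(6\lambda-3\lambda^2)x+(2\lambda^3-6\lambda^2+3\lambda)$, apply McCarthy's Theorem 1.2, and combine the result with Theorem \ref{theorem-DIK-1}. Your explicit checks, namely the argument $-27B^2/(4A^3)$, $\phi(-3\lambda)\phi(B)=\phi(-3(2\lambda^2-6\lambda+3))$, $\phi(-3\lambda)\phi(3\lambda)=\phi(-1)$, and the role of each excluded value of $\lambda$, supply details the paper leaves implicit; the quoted normalization ($\phi(b)\,p$ with argument $-27b^2/(4a^3)$) is exactly the one the paper uses.
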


	\begin{theorem}\label{theorem-summation-2}
	Let  $p>3$ be a prime and $\lambda\in \mathbb{F}_p$ such that $\lambda\neq0,2,\frac{9}{4},\frac{3\pm\sqrt{3}}{2}.$ Then we have 
\begin{align*}
	&\sum\limits_{s=1,\ s\neq\frac{p-1}{2}}^{p-2}\tilde{_2G_2}(s,\lambda)_p+\frac{(p+1)}{p}+ \phi(1-6\lambda)(1+\phi(-6\lambda))\\&=\frac{(1-p)}{\phi(6\lambda^2-18\lambda+9)} \cdot {}_2G_2\left[\begin{array}{cc}
  \frac{1}{4}   & \frac{3}{4}  \\
  \frac{1}{3}    & \frac{2}{3}  \end{array}|\frac{(2\lambda^2-6\lambda+3)^2}{4\lambda(\lambda-2)^3}\right]_p.
\end{align*}
	\end{theorem}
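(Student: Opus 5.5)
The plan is to combine Theorem \ref{theorem-DIK-2} with a known expression of $a_p(E_\lambda^{\dik})$ as a special value of ${}_2G_2\left[\frac14,\frac34;\frac13,\frac23\mid\cdot\right]_p$, and then divide by $p$.

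\textbf{Step 1: $a_p$ of a Weierstrass model.} For an elliptic curve $y^2=x^3+ax+b$ over $\F_p$ with $p>3$ and $ab\neq 0$, McCarthy's result (used in \cite{McCarthyPJM} and its follow-ups) gives
\[
a_p(E)=\phi(b)\,p\,{}_2G_2\left[\begin{array}{cc}\frac14&\frac34\\ \frac13&\frac23\end{array}\Big|-\frac{27b^2}{4a^3}\right]_p .
\]
I would quote this as a known input, in the form used in the proof of Theorem \ref{theorem-summation-1}.

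\textbf{Step 2: depress the DIK cubic.} Write $x^3+3\lambda(x+1)^2=x^3+3\lambda x^2+6\lambda x+3\lambda$ and substitute $x\mapsto x-\lambda$. This gives $y^2=x^3+ax+b$ with
\[
a=3\lambda(2-\lambda),\qquad b=2\lambda^3-6\lambda^2+3\lambda=\lambda(2\lambda^2-6\lambda+3).
\]
Then
\[
-\frac{27b^2}{4a^3}=\frac{27\lambda^2(2\lambda^2-6\lambda+3)^2}{4\cdot 27\lambda^3(\lambda-2)^3}=\frac{(2\lambda^2-6\lambda+3)^2}{4\lambda(\lambda-2)^3},
\]
which is exactly the argument in the statement. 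The hypotheses $\lambda\neq 0,2,\frac{3\pm\sqrt3}{2}$ are precisely $ab\neq0$, and $\lambda\neq\frac94$ ensures good reduction. Hence
\[
a_p(E_\lambda^{\dik})=p\,\phi\bigl(\lambda(2\lambda^2-6\lambda+3)\bigr)\,{}_2G_2[\cdots]_p .
\]

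\textbf{Step 3: substitute into Theorem \ref{theorem-DIK-2} and divide by $p$.} The right side becomes
\[
(1-p)\,\phi(3\lambda)\,\phi(\lambda)\,\phi(2\lambda^2-6\lambda+3)\,{}_2G_2[\cdots]_p=(1-p)\,\phi\bigl(3(2\lambda^2-6\lambda+3)\bigr)\,{}_2G_2[\cdots]_p .
\]
Since $3(2\lambda^2-6\lambda+3)=6\lambda^2-18\lambda+9\neq0$ and $\phi$ takes values $\pm1$, multiplying by $\phi$ equals dividing by $\phi$. Dividing the whole identity by $p$ then gives exactly the stated formula.

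\textbf{Main obstacle.} The only real risk is the normalization in Step 1: whether the character factor is $\phi(b)$ or $\phi(-b)$, or $\phi$ of some other expression. I would pin this down by matching the sign against the analogous step in the proof of Theorem \ref{theorem-summation-1}, whose factor $\phi(-3(2\lambda^2-6\lambda+3))$ arises in the same way. Comparing that step with the statement here fixes the convention for this case.
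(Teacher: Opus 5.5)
Your proposal is correct and is the paper's proof: the paper shifts $x$ by $\lambda$ to obtain $y^2=x^3+(6\lambda-3\lambda^2)x+(2\lambda^3-6\lambda^2+3\lambda)$, applies McCarthy's Theorem 1.2, substitutes into Theorem \ref{theorem-DIK-2}, and divides by $p$. The sign you flag is not a real risk: McCarthy's formula carries exactly the factor $\phi(b)$, which is also consistent with the factor $\phi(-3(2\lambda^2-6\lambda+3))$ in Theorem \ref{theorem-summation-1}, so you should not treat the target statement as a way to fix the normalization.
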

	In the next theorem, we express the trace of Frobenius of the Jacobi curve as special values of $p$-adic hypergeometric functions.
	
	\begin{theorem}\label{theorem-Jacobi}
	Let $p>3$ be a prime and $\lambda\in \mathbb{F}_p$ such that $\lambda\neq0, \pm 1.$ Then we have 
	$$ 
	a_p(E^{\jac}_{\lambda})=1+\phi(2\lambda) \cdot{_2G_2}\left[\begin{array}{cc}
\frac{1}{4}   & \frac{3}{4} \\
  0    & 0  \end{array}|{\lambda^2}\right]_p=1+p\cdot\phi(-2\lambda)\cdot {_2G_2}\left[\begin{array}{cc}
\frac{1}{4}   & \frac{3}{4} \\
  \frac{1}{2}    & \frac{1}{2}  \end{array}|{\lambda^2}\right]_p.$$
\end{theorem}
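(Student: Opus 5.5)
Our plan is to count points on $E^{\jac}_\lambda$ with character sums and then convert Gauss sums into $p$-adic gamma values through the Gross--Koblitz formula, in the same spirit as McCarthy's results. The curve is a quartic model, so we first fix the point count. The affine part of $y^2=x^4+2\lambda x^2+1$ contributes $\sum_{x\in\F_p}(1+\phi(x^4+2\lambda x^2+1))$. The two points at infinity of the smooth model are rational, because the leading coefficient $1$ is a square. Hence
\[
a_p(E^{\jac}_\lambda)=-\sum_{x\in\F_p}\phi(x^4+2\lambda x^2+1)-1 .
\]
We will double-check this normalization (for instance against a birational Weierstrass model); it is the source of the constant $1$ in the statement, possibly after combining with a boundary term produced later.

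Next we substitute $u=x^2$, which gives $\sum_x\phi(f(x^2))=\sum_u(1+\phi(u))\phi(u^2+2\lambda u+1)$. This splits the sum into two pieces:
\begin{itemize}
\item $S_1=\sum_u\phi(u^2+2\lambda u+1)$, which equals $-1$ because the discriminant $4(\lambda^2-1)$ is nonzero;
\item $S_2=\sum_u\phi(u)\phi(u^2+2\lambda u+1)=\sum_{u\neq0}\phi(u+2\lambda+u^{-1})$.
\end{itemize}
We expand $S_2$ using the standard identity $\phi(A)=\frac{1}{p-1}\sum_\chi G(\chi\phi)\ldots$, or equivalently through additive characters $\sum_z\theta(zA)$. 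This expresses $S_2$ as a double Gauss-sum expression in a character $\chi=\overline{\omega}^j$, with the argument $\lambda^2$ arising after completing the symmetric sum in $u$ and $u^{-1}$. More concretely, we write $\phi(u+2\lambda+u^{-1})=\phi(2\lambda)\phi(1+\frac{u+u^{-1}}{2\lambda})$ and expand with the binomial theorem for characters. The sums over $u$ of $\chi(u)\overline{\chi}$-type terms then force quadratic (Davenport--Hasse) relations. These convert $G(\chi^2)$ factors into $G(\chi)G(\chi\phi)$ and introduce $\chi(4)$ and the argument $\lambda^2$.

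The outcome should be $\phi(2\lambda)\cdot\frac{1}{p-1}\sum_j \overline\omega^j(\lambda^2)\,\frac{g(\overline\omega^{j}\chi_4)g(\overline\omega^j\chi_4^3)\,g(\omega^j)^2}{g(\chi_4)g(\chi_4^3)\cdots}$, up to a sign and the $j=0$ and $j=\frac{p-1}{2}$ boundary terms. We then apply the Gross--Koblitz formula and Lemma-type product formulas for $\Gamma_p$, so that each Gauss sum becomes $-\pi^{(p-1)\langle\cdot\rangle}\Gamma_p(\langle\cdot\rangle)$. After that we match the powers of $-p$ with the floor exponents in McCarthy's definition using parameters $\frac14,\frac34;0,0$. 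Here the Gauss sums with quarter-characters must be recombined via the multiplication formula for $\Gamma_p$ (Gauss multiplication with $m=4$, or $m=2$ applied twice). This is what lets the result hold for all $p>3$, not only $p\equiv1\pmod4$, since $\chi_4$ need not exist. To achieve this we should work throughout with $\Gamma_p(\langle \frac{1}{4}-\frac{j}{p-1}\rangle)$ rather than Gauss sums of $\chi_4$.

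For the second equality we rerun the computation, or better, shift the summation index $j\mapsto j+\frac{p-1}{2}$ in the first ${}_2G_2$. This turns the bottom parameters $0,0$ into $\frac12,\frac12$ and multiplies by $\overline\omega^{(p-1)/2}(\lambda^2)=1$. The shift also changes the exponents of $-p$ by exactly one power, giving the factor $p$, and changes the sign by $\phi(-1)$. We must verify that the $\Gamma_p$ normalizing constants $\Gamma_p(\frac12)^2$ produce exactly $\phi(-1)$, using $\Gamma_p(\frac12)^2=-\phi(-1)$.

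We expect the main obstacle to be the careful bookkeeping of the floor exponents and the boundary terms. In particular, the $j$ values where $\langle\frac14-\frac{j}{p-1}\rangle$ crosses zero must give precisely the factor $(-p)$ powers, and the stray terms from $S_1$ and $j=0$ must cancel to leave the clean constant $1$.
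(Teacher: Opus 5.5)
Your overall route is essentially the paper's: split off $S_1$, expand $S_2$ in Gauss sums, apply Hasse--Davenport, then Gross--Koblitz with the $\Gamma_p$ multiplication formula for $t=4$ and $t=2$, and finish with a half-period shift. The gap is your normalization of $a_p(E^{\jac}_\lambda)$, and with it your explanation of the constant $1$: under your normalization the $1$ cannot appear.

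Write $G_0(t)$ and $G_{1/2}(t)$ for the two ${}_2G_2$ functions in the theorem. With your count (smooth model, two rational points at infinity) you get $a_p(E^{\jac}_\lambda)=-1-S_1-S_2=-S_2$. The character-sum computation then gives exactly
\[
S_2=\frac{\phi(2\lambda)}{p-1}\sum_{\chi}g(\overline{\chi})^2\,\frac{g(\chi^4)}{g(\chi^2)}\,\overline{\chi}(64\lambda^2)=-\phi(2\lambda)\,G_0(\lambda^2).
\]
There are no exceptional terms at $j=0$ or $j=\frac{p-1}{2}$, because Hasse--Davenport (with $g(\varepsilon)=-1$), Gross--Koblitz and the multiplication formula all hold uniformly in the summation index. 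So your argument proves $a_p(E^{\jac}_\lambda)=\phi(2\lambda)G_0(\lambda^2)$, which is the stated identity minus $1$, and nothing is left over to supply the $1$. For instance, take $p=5$, $\lambda=2$. Then $\sum_x\phi(x^4+4x^2+1)=1$, so the smooth model has $8$ points and trace $-2$, as does the paper's Weierstrass model $E''$. A direct Gauss-sum evaluation gives $G_0(4)=-2$.

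The theorem holds with the paper's convention: $|E^{\jac}_\lambda(\F_p)|$ is the number of affine solutions plus one point at infinity, namely the single point $[0:1:0]$ of the plane closure. This matches the DIK proof, where $p(|E(\F_p)|-1)=p^2+\sum_{z\neq 0}\cdots$. Then $a_p(E^{\jac}_\lambda)=-\sum_x\phi(x^4+2\lambda x^2+1)=1-S_2$. This is exactly what the paper's identity $-p\,a_p(E^{\jac}_\lambda)=p+A_1+A_2$ records: the lone $p$ is the $x=0$ term, $A_1=p(S_1-1)=-2p$, and $A_2=pS_2$. It is also why the paper later writes $a_p(E^{\jac}_\lambda)=a_p(E'')+1$: its $a_p(E^{\jac}_\lambda)$ is one more than the Frobenius trace.

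Once you adopt this convention, the rest of your plan goes through. Your argument for the second equality is correct: under $j\mapsto j+\frac{p-1}{2}$ the exponent of $-p$ rises by exactly one, $\Gamma_p(\frac12)^2=-\phi(-1)$, and $\phi(\lambda^2)=1$, giving $\phi(2\lambda)G_0(\lambda^2)=p\,\phi(-2\lambda)G_{1/2}(\lambda^2)$. This is the paper's substitution $r\mapsto -r-\frac{p-1}{2}$, carried out directly in the definition of ${}_2G_2$ rather than in the Gauss-sum expression.

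One small slip: in your predicted Gauss-sum form, with factors $g(\overline{\omega}^j\chi_4)g(\overline{\omega}^j\chi_4^3)g(\omega^j)^2$, the twist must be $\omega^j(\lambda^2)$. With $\overline{\omega}^j(\lambda^2)$ you would land at the argument $\lambda^{-2}$.
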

Another, important question that we aim to investigate is to find transformation formulas of $p$-adic hypergeometric functions that are analogous to the classical hypergeometric identities. For example, the second equality that arise in Theorem \ref{theorem-Jacobi} can be described as a $p$-adic analogue of Euler's linear transformation identity. For a detailed study of such transformation identities we refer \cite{Askey, bailey}. Moreover, we provide an identity that is analogous to Pfaff transformation in the next theorem.

	\begin{theorem}\label{theorem-Jacobi trans}
	Let $p>3$ be a prime and $\lambda\in \mathbb{F}_p$ such that $\lambda\neq0, \pm 1, \pm\sqrt{2}.$ Then we have 
	$$ 
	\phi(-2\lambda){_2G_2}\left[\begin{array}{cc}
\frac{1}{4}   & \frac{3}{4} \\
  \frac{1}{2}    & \frac{1}{2}  \end{array}|{\lambda^2}\right]_p=\phi(1-\lambda^2){_2G_2}\left[\begin{array}{cc}
\frac{1}{4}   & \frac{3}{4} \\
  \frac{1}{2}    & \frac{1}{2}  \end{array}|\frac{\lambda^2}{\lambda^2-1}\right]_p.$$ 
  \end{theorem}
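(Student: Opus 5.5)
The plan is to derive the Pfaff-type identity from Theorem \ref{theorem-Jacobi} by realizing the two hypergeometric values as traces of Frobenius of two Jacobi curves that are isomorphic, or at least isogenous, over $\F_p$. By the second equality of Theorem \ref{theorem-Jacobi}, for $\lambda\neq 0,\pm1$ we have
$$p\,\phi(-2\lambda)\,{_2G_2}\left[\begin{array}{cc}\frac14&\frac34\\ \frac12&\frac12\end{array}|\lambda^2\right]_p=a_p(E^{\jac}_\lambda)-1 .$$

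To put the right-hand side of the statement in the same form, I will choose $\mu\in\F_p$ with $\mu^2=\frac{\lambda^2}{\lambda^2-1}$, when such a $\mu$ exists. The restrictions $\lambda\neq\pm\sqrt2$ and $\lambda\ne0,\pm1$ give $\mu\neq0,\pm1$, so Theorem \ref{theorem-Jacobi} applies again and yields
$$p\,\phi(-2\mu)\,{_2G}[\cdots|\mu^2]_p=a_p(E^{\jac}_\mu)-1 .$$
Since $\mu$ need not lie in $\F_p$, I will work instead with the first expression in the proof of Theorem \ref{theorem-Jacobi}. That expression involves only $\lambda^2$ up to the factor $\phi(2\lambda)$. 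I will therefore rewrite $a_p(E^{\jac}_\lambda)$ directly as a character sum $-\sum_x\phi(x^4+2\lambda x^2+1)$, which turns into a sum over $u=x^2$ of $(1+\phi(u))\phi(u^2+2\lambda u+1)$.

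The key step is the change of variables on the Jacobi curve. The quartic $x^4+2\lambda x^2+1$ corresponds to the Legendre-type cubic $y^2=u(u^2+2\lambda u+1)$, whose trace equals $a_p(E^{\jac}_\lambda)$ up to the standard $\phi(\cdot)$ twist. Completing the square, $u^2+2\lambda u+1=(u+\lambda)^2-(\lambda^2-1)$. Substituting $u\mapsto \sqrt{\cdot}$ scalings shows that the curve $y^2=u(u^2+2\lambda u+1)$ is a quadratic twist of $y^2=u(u^2+2\mu u+1)$. Concretely, I will use the classical 2-isogeny $y^2=u(u^2+au+b)\to y^2=u(u^2-2au+a^2-4b)$, which preserves point counts. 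With $a=2\lambda$ and $b=1$ this gives $u^2-4\lambda u+4(\lambda^2-1)$. Rescaling $u$ by $2\sqrt{\lambda^2-1}$ normalizes the constant term to $1$, producing the middle coefficient $2\cdot\frac{-\lambda}{\sqrt{\lambda^2-1}}$, i.e. the parameter $\mu^2=\lambda^2/(\lambda^2-1)$. Rewriting everything as character sums in $u$ avoids needing $\sqrt{\lambda^2-1}\in\F_p$: the scaling introduces exactly a factor $\phi(\lambda^2-1)$, or equivalently $\phi(1-\lambda^2)$ after absorbing $\phi(-1)$.

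Finally I will match the character-sum expression for ${_2G_2}[\cdots|t]_p$ with $t=\lambda^2$ and with $t=\lambda^2/(\lambda^2-1)$. For this I will use the identity, presumably established in the proof of Theorem \ref{theorem-Jacobi} via Gauss sums and Gross--Koblitz, that expresses $p\,{_2G_2}[\cdots|t]_p$ as $\phi(\text{sign})\sum_u\phi(u)\phi(u^2+2\sqrt t\,u+1)$-type sums depending only on $t$. The extra $1$ terms in Theorem \ref{theorem-Jacobi} cancel on both sides, and dividing by $p$ gives the claim.

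The main obstacle is the bookkeeping of the quadratic characters. The factors $\phi(-2\lambda)$ versus $\phi(1-\lambda^2)$ must be tracked carefully through the isogeny and the rescaling, in particular in the case $\phi(\lambda^2-1)=-1$, where the new parameter is not a square of an element of $\F_p$. I will handle this by never extracting $\mu$, working only with sums expressed in $t=\mu^2$, and checking the sign separately by comparing the $j=0$ or $j=\frac{p-1}{2}$ terms.
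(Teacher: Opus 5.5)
\textbf{Verdict: the statement as printed is false, so your argument cannot succeed.} It breaks at two specific steps, and the paper's own proof carries the matching sign error. Throughout, write $G(t)$ for ${}_2G_2[\frac14,\frac34;\frac12,\frac12\,|\,t]_p$ and $C_\nu\colon y^2=w(w^2+2\nu w+1)$. Then $a_p(C_\nu)=a_p(E^{\jac}_\nu)-1=p\,\phi(-2\nu)G(\nu^2)$ by Theorem~\ref{theorem-Jacobi}; there is no twist here.

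\emph{The square case.} Suppose $\lambda^2-1=\sigma^2$. The substitution $u=2\sigma w$ turns $u(u^2-4\lambda u+4\sigma^2)$ into $8\sigma^3w(w^2+2\mu w+1)$ with $\mu=-\lambda/\sigma$.
\begin{itemize}
\item The factor this produces is $\phi(2\sigma)$, not $\phi(\lambda^2-1)$, which equals $1$ here.
\item Also, $\phi(\lambda^2-1)$ and $\phi(1-\lambda^2)$ differ by $\phi(-1)$, which cannot simply be ``absorbed''.
\end{itemize}
Tracked correctly, your route gives $\phi(-2\lambda)G(\lambda^2)=\phi(2\sigma)\phi(-2\mu)G(\mu^2)=\phi(\lambda)\,G\bigl(\tfrac{\lambda^2}{\lambda^2-1}\bigr)$. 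This differs from the claimed identity by the factor $\phi(-\lambda)$.

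\emph{The non-square case.} If $\lambda^2-1$ is a non-residue, then so is $\lambda^2/(\lambda^2-1)$, and Theorem~\ref{theorem-Jacobi} says nothing about $G$ at such arguments. There is no ``$\sqrt t$'' formula to fall back on. Comparing the $j=0$ or $j=\frac{p-1}{2}$ term cannot fix the sign of the whole sum.

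The missing input is an evaluation of $G$ valid for every $t$. Running the Gross--Koblitz and Hasse--Davenport steps of the proof of Theorem~\ref{theorem-Jacobi} with a general argument gives $p\,G(t)=-\phi(-1)\sum_{s}\phi(s)\phi(4ts^2+4ts+1)$. Equivalently, $a_p(y^2=x^3+ax^2+bx)=p\,\phi(-ab)\,G\bigl(\tfrac{a^2}{4b}\bigr)$. This is what the paper imports from \cite[Theorem 3.5]{BS}, applied to $E''\colon y^2=x^3-4\lambda x^2+(4\lambda^2-4)x$, which is exactly your $2$-isogenous curve. It then combines this with the inversion \eqref{eqn-inverse}. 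That route never needs $\sqrt{\lambda^2-1}$.

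\emph{Why no bookkeeping can rescue the printed identity.} Replacing $\lambda$ by $-\lambda$ preserves the hypotheses and the right-hand side, but multiplies the left-hand side by $\phi(-1)$. So for $p\equiv 3\pmod 4$ the statement would force $G(\lambda^2)=0$, i.e.\ $a_p(E^{\jac}_\lambda)=1$, for every admissible $\lambda$. Here is an explicit counterexample in $\F_{11}$:
\begin{itemize}
\item Direct counting gives $a_{11}(E^{\jac}_2)=5$ and $a_{11}(E^{\jac}_4)=-3$.
\item Theorem~\ref{theorem-Jacobi} then gives $G(4)=G(5)=-\frac{4}{11}$.
\item Since $\frac43=5$ in $\F_{11}$, at $\lambda=-2$ the left side is $\phi(4)G(4)=-\frac{4}{11}$, while the right side is $\phi(-3)G(5)=\frac{4}{11}$.
\end{itemize}
In general the printed identity holds exactly when $\phi(-\lambda)=1$ or $G(\lambda^2)=0$.

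The paper's proof has the same defect. The factor $\phi(1-\lambda^2)$ in \eqref{eqn-1023} is even in $\lambda$, whereas $a_p(E'')$ changes by $\phi(-1)$ under $\lambda\mapsto-\lambda$. The correct factor is $\phi(-ab)=\phi(\lambda(\lambda^2-1))$.

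What is true, and what either route proves once the signs are right, is
$\phi(2)\,G(\lambda^2)=\phi(1-\lambda^2)\,G\bigl(\tfrac{\lambda^2}{\lambda^2-1}\bigr)$. Your corrected square-case computation is exactly this, and the general-$t$ formula above supplies the non-square case.
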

	
In the following two theorems, we state further summation identities for the functions ${_2G_2}(r,t)_p$ and $\tilde{_2G_2}(s,t)_p.$	
	
\begin{theorem}\label{Hessian-trans}
	Let $p>3$ be a prime and  $d\in \mathbb{F}_p$ be such that $d\neq-2,0,$  $d^3\neq 1$ and $\frac{d^2+d+1}{3(d+2)}$ is square in $\mathbb{F}_p^\times$, then  
   \begin{align*}
&\frac{p^2\phi(-(3d^3+9d^2+9d+6))}{(p-1)} \sum_{r=1}^{p-2}{\omega}^r\left(\frac{3(d+2)^3}{2(d^2+d+1)}\right)  \binom{\overline{\omega}^r}{\overline{\omega}^r\phi}{_2G_2}(r,1)_p\\&=1-\gamma-N_0+\frac{\phi(3d^3+9d^2+9d+6)}{(p-1)} +p\cdot \phi(-3d)\cdot{_2G_2}\left[\begin{array}{cc}
  \frac{1}{2}   & \frac{1}{2}  \\
  \frac{1}{6}    & \frac{5}{6}  \end{array}|\frac{1}{d^3}\right]_p,
\end{align*}
where $$\gamma=\begin{cases}
    5-6\phi(-3), & \text{if} \ p \equiv 1 \pmod{3},\\
    1, & \text{if} \ p\not\equiv 1 \pmod{3},
\end{cases}\
\text{and} \ N_0=\begin{cases}
    2, & \text{if} \ p \equiv 1,-5 \pmod{12},\\0, &  \text{if}\ p\not\equiv 1,-5 \pmod{12}.
\end{cases}$$
\end{theorem}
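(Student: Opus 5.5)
Theorem \ref{Hessian-trans} should follow by combining Theorem \ref{theorem-DIK-1} with a known $p$-adic hypergeometric expression for the trace of the Hessian curve, linked by an explicit isomorphism (over $\F_p$, possibly up to a quadratic twist) between $E^{\hes}_d$ and a DIK curve $E^{\dik}_\lambda$. The argument has four steps.

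\textbf{Step 1: relate the two curves.} I transform $E^{\hes}_d:x^3+y^3+1=3dxy$ into Weierstrass form. Sending the flex point $[1:-1:0]$ to infinity gives the standard model
\[
Y^2+3dXY+\ldots = X^3-\ldots .
\]
I then complete the square and shift $x$ so that the cubic takes the shape $x^3+3\lambda(x+1)^2$ after scaling. Matching $j$-invariants,
\[
j(E^{\hes}_d)=\frac{27d^3(d^3+8)^3}{(d^3-1)^3}
\quad\text{and}\quad
j(E^{\dik}_\lambda)=\frac{6912\lambda(\lambda-2)^3}{4\lambda-9},
\]
suggests the choice
\[
\lambda=\frac{3(d+2)^3}{12(d^2+d+1)}\cdot(\text{const}),
\]
fixed so that $6\lambda=\frac{3(d+2)^3}{2(d^2+d+1)}$. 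This is consistent with the argument $\omega^r(\cdot)$ in the statement. The same choice should give $-3\lambda=-(3d^3+9d^2+9d+6)\cdot(\text{square})$, which accounts for the character $\phi(-(3d^3+\cdots))$. The scaling of $x,y$ needed to reach the DIK form involves a square root of $\frac{d^2+d+1}{3(d+2)}$. This is where the hypothesis that this quantity is a square is used: it makes the isomorphism defined over $\F_p$, so $a_p(E^{\hes}_d)=a_p(E^{\dik}_\lambda)$ with no twist. The conditions $d\neq -2,0$ and $d^3\neq1$ ensure $\lambda\neq 0,\frac94$ and good reduction.

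\textbf{Step 2: apply Theorem \ref{theorem-DIK-1}.} Substituting this $\lambda$ into Theorem \ref{theorem-DIK-1} makes the left-hand side exactly the displayed sum, with $\phi(-3\lambda)$ rewritten via the square class. The right-hand side becomes
\[
\frac{\phi(3d^3+9d^2+9d+6)}{p-1}+a_p(E^{\hes}_d).
\]

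\textbf{Step 3: express $a_p(E^{\hes}_d)$ hypergeometrically.} I count points on $E^{\hes}_d$ directly. The projective curve $x^3+y^3+z^3=3dxyz$ differs from an affine count by the points at infinity, and their number depends on whether $-1$ has nontrivial cube roots, i.e.\ on $p \bmod 3$. The remaining count I write through multiplicative character sums. Expanding
\[
\sum_{x,y}\theta\bigl(z(x^3+y^3+1-3dxy)\bigr),
\]
using Gauss sums and then converting to $\Gamma_p$ via Gross--Koblitz, the standard computation (as in McCarthy's treatment of the Hessian family) yields
\[
p\,\phi(-3d)\,{}_2G_2\left[\begin{array}{cc}\frac12&\frac12\\ \frac16&\frac56\end{array}\Big|\frac1{d^3}\right]_p
\]
plus degenerate terms. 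These degenerate terms arise from characters for which the Gauss-sum products collapse, namely cubic characters when $p\equiv1\pmod 3$. They produce the constant $\gamma$, including the $\phi(-3)$ contribution. The correction $N_0$ comes from points that must be removed or added in the birational map of Step 1, such as singular points or points sent to the affine locus $x=0$ or $y=0$; it depends on $p \bmod 12$ through whether $-3$ and $-1$ are squares.

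\textbf{Step 4: combine.} Putting Steps 2 and 3 together gives the identity in the form $1-\gamma-N_0+\cdots$.

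The main obstacle is the bookkeeping in Step 3, namely isolating exactly which character indices $j$ give degenerate Gauss-sum products so as to obtain $\gamma$ and $N_0$ precisely. I would first do the count affinely, separating the loci $xy=0$, and then handle the cases $p\equiv1\pmod3$ and $p\equiv 2\pmod 3$ separately, verifying the constants against small primes such as $p=7$ and $p=13$.
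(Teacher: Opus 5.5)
Your route is the paper's. You identify $E^{\hes}_d$ with $E^{\dik}_\lambda$ for $\lambda=\frac{(d+2)^3}{4(d^2+d+1)}$; the paper passes through $\widetilde E: y^2=x^3+((d+2)x+k)^2$ with $k=\frac43(d^2+d+1)$, and the rescaling by $\frac{k}{d+2}$ is exactly where the square hypothesis enters. You then apply Theorem \ref{theorem-DIK-1} and insert the Hessian point count in terms of $G_d:={}_2G_2\left[\begin{array}{cc}\frac12&\frac12\\ \frac16&\frac56\end{array}\Big|\,\frac{1}{d^3}\right]_p$. The paper takes that count from \cite[Theorem 3.3]{BS1} rather than re-deriving it.

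The flaw is your account of $N_0$. In your Step 1 the identification is an isomorphism of projective curves, so $a_p(E^{\hes}_d)=a_p(E^{\dik}_\lambda)$ exactly, and that step contributes no correction. If you also subtract $N_0$ ``for the birational map'' on top of the points at infinity you add in Step 3, you count the same points twice, and the identity comes out off by $2$ when $p\equiv1\pmod 3$.

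In the paper, $N_0$ appears only because it compares \emph{affine} counts. The affine map $(u,v)\mapsto(x,y)$ misses the $1+\phi(-3)$ affine points of $\widetilde E$ with $x=-k$. These are the images of the points $[1:-\zeta:0]$ and $[1:-\zeta^2:0]$ at infinity of the Hessian other than $[1:-1:0]$, with $\zeta$ a primitive cube root of unity.

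In your projective bookkeeping the same number enters through the points at infinity. There are $2+\phi(-3)=1+N_0$ of them; the condition $p\equiv 1,-5\pmod{12}$ is just $p\equiv1\pmod3$, and whether $-1$ is a square plays no role. The affine count is $\gamma-1+p-p\phi(-3d)G_d$, so
\[
a_p(E^{\hes}_d)=p+1-\bigl(\gamma-1+p-p\,\phi(-3d)G_d\bigr)-(1+N_0)=1-\gamma-N_0+p\,\phi(-3d)\,G_d .
\]
Moreover $\gamma=-\phi(-3)$ and $N_0=1+\phi(-3)$, so $1-\gamma-N_0=0$ identically. The theorem is therefore just Theorem \ref{theorem-DIK-1} combined with $a_p(E^{\dik}_\lambda)=a_p(E^{\hes}_d)=p\,\phi(-3d)\,G_d$, and a correct Step 3 leaves no surviving constant.

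For example, take $p=13$, $d=4$, $\lambda=10$. The Hessian has $15$ affine and $18$ projective points, and $E^{\dik}_{10}$ has $17$ affine and $18$ projective points. So both traces are $-4$, and $15=17-N_0$ as in the paper's affine comparison.
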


\begin{theorem}\label{Hessian-trans2}
	Let $p>3$ be a prime and  $d\in \mathbb{F}_p$ be such that $d\neq-2,0,$ $d^3\neq 1$ and define $t:=\frac{d^2+d+1}{3(d+2)}$ such that $t$ be square in $\mathbb{F}_p^\times$, then  
  \begin{align*}
&\frac{p\cdot\phi(3d^3+9d^2+9d+6)}{(p-1)}\sum\limits_{s=1,\ s\neq\frac{p-1}{2}}^{p-2}\tilde{_2G_2}\left(s,\frac{(d+2)^2}{12t}\right)_p+\frac{(p+1)\phi(3d^3+9d^2+9d+6)}{p-1}\\&+\frac{p\cdot\phi(3d^3+9d^2+9d+6)\phi(-3d^3-16d^2-34d-22)(1+\phi(-(6d^3+18d^2+18d+12)))}{(p-1)\phi(2(d^2+d+1))}\\&=-1+\gamma +N_0 -p\cdot\phi(-3d)\cdot{_2G_2}\left[\begin{array}{cc}
  \frac{1}{2}   & \frac{1}{2}  \\
  \frac{1}{6}    & \frac{5}{6}  \end{array}|\frac{1}{d^3}\right]_p,
\end{align*}
where $\gamma$ and $N_0$ are same as defined in Theorem \ref{Hessian-trans}.
\end{theorem}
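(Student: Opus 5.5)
Theorem \ref{Hessian-trans2} should follow by combining Theorem \ref{theorem-DIK-2} with a known $p$-adic hypergeometric expression for the trace of Frobenius of the Hessian curve, linked through an explicit isomorphism between $E^{\hes}_d$ and a DIK curve $E^{\dik}_\lambda$. This is the same route that presumably gives Theorem \ref{Hessian-trans} from Theorem \ref{theorem-DIK-1}.

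\textbf{Step 1: the isomorphism.} Put $E^{\hes}_d$ in Weierstrass form, using the flex at infinity or a rational point such as $(-1,0)$. Then find $\lambda=\lambda(d)$ with $j(E^{\dik}_\lambda)=j(E^{\hes}_d)$, together with a change of variables identifying the two curves up to a quadratic twist. The parameter $\frac{(d+2)^2}{12t}$ in the statement suggests $\lambda=\frac{(d+2)^2}{12t}=\frac{(d+2)^3}{4(d^2+d+1)}$, so that $6\lambda=\frac{3(d+2)^3}{2(d^2+d+1)}$, matching the argument of $\omega^r$ in Theorem \ref{Hessian-trans}. The hypothesis that $t$ is a square in $\F_p^\times$ is what makes the scaling $x\mapsto u^2x$, $y\mapsto u^3y$ defined over $\F_p$. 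I therefore expect $a_p(E^{\hes}_d)=a_p(E^{\dik}_\lambda)$, or the same up to an explicit quadratic character that the hypothesis kills. One then checks the identities
\begin{align*}
3\lambda\cdot(\text{square}) &= 3d^3+9d^2+9d+6,\\
1-6\lambda &= \frac{-3d^3-16d^2-34d-22}{2(d^2+d+1)}\cdot(\text{square}),\\
-6\lambda &= -(6d^3+18d^2+18d+12)\cdot(\text{square}).
\end{align*}
These convert the characters $\phi(3\lambda)$, $\phi(1-6\lambda)$ and $\phi(-6\lambda)$ of Theorem \ref{theorem-DIK-2} into the ones appearing in the statement. The restrictions $d\neq-2,0$ and $d^3\neq1$ ensure $\lambda\neq0,\frac94$, so the DIK curve has good reduction.

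\textbf{Step 2: the DIK side.} Apply Theorem \ref{theorem-DIK-2} at this $\lambda$ and divide by $(p-1)\phi(3\lambda)$. This gives
$$\frac{\phi(3\lambda)}{p-1}\Big[p\sum_{s}\tilde{_2G_2}(s,\lambda)_p+(p+1)+p\,\phi(1-6\lambda)\bigl(1+\phi(-6\lambda)\bigr)\Big]=-a_p(E^{\dik}_\lambda).$$
After the substitutions of Step 1, the left-hand side is exactly the left-hand side of Theorem \ref{Hessian-trans2}.

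\textbf{Step 3: the Hessian side.} I need $a_p(E^{\hes}_d)$ in terms of ${_2G_2}\left[\begin{array}{cc}\frac12&\frac12\\ \frac16&\frac56\end{array}|\frac1{d^3}\right]_p$ plus correction terms. The natural route is to count points on the projective curve $x^3+y^3+1=3dxy$ with characters and Gauss sums: write the count as character sums, expand via Gauss sums, convert to $p$-adic gamma functions by the Gross--Koblitz formula, and simplify using the multiplication formula for $\Gamma_p$. The terms $\gamma$ and $N_0$ come from degenerate contributions. These are the points at infinity and on the coordinate axes, whose number depends on whether $-3$ is a square (three points at infinity when $p\equiv1\pmod 3$), and special characters $j$ that vanish only under congruences mod $12$. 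Concretely the target is
$$a_p(E^{\hes}_d)=1-\gamma-N_0+p\,\phi(-3d)\,{_2G_2}\left[\begin{array}{cc}\frac12&\frac12\\ \frac16&\frac56\end{array}|\frac1{d^3}\right]_p,$$
which is consistent with Theorem \ref{Hessian-trans}. Substituting this into Step 2 gives the theorem with the stated sign pattern $-1+\gamma+N_0-p\phi(-3d)\,{_2G_2}$.

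\textbf{Main obstacle.} I expect Step 3 to be hardest: tracking exactly which index values $j$ in the Gauss-sum expansion hit the jumps of the floor exponents, and which points are degenerate, so as to get precisely $\gamma$ and $N_0$. A secondary risk is verifying the twist factor in Step 1. If a quadratic twist appears, I would absorb it by showing its character equals $1$ using that $t$ is a square.
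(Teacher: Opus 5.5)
Your plan is the paper's proof: it identifies $E^{\hes}_d$ with $E^{\dik}_\lambda$ at exactly your $\lambda=\frac{(d+2)^2}{12t}=\frac{(d+2)^3}{4(d^2+d+1)}$, going through the intermediate curve $\widetilde{E}: y^2=x^3+((d+2)x+k)^2$ with $k=\frac43(d^2+d+1)$ and then rescaling by $\frac{k}{d+2}=4t$, which is where squareness of $t$ is used, and your character conversions and application of Theorem \ref{theorem-DIK-2} are exactly what is needed. The step you flag as hardest is not computed in the paper at all: the Hessian count is quoted from \cite[Theorem 3.3]{BS1}, and within the paper you get $a_p(E^{\dik}_\lambda)=1-\gamma-N_0+p\,\phi(-3d)\,{}_2G_2[\cdots]$ directly by comparing Theorem \ref{Hessian-trans} with Theorem \ref{theorem-DIK-1} at this $\lambda$; moreover $\gamma$ and $N_0$ are point-count bookkeeping (the constant in the quoted count and the points of $\widetilde{E}$ with $x=-k$), not degenerate Gauss-sum indices, and in fact $1-\gamma-N_0=0$ for every prime $p>3$, so the real input is just $a_p(E^{\dik}_\lambda)=a_p(E^{\hes}_d)=p\,\phi(-3d)\,{}_2G_2[\cdots]$.
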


The rest of the paper is structured as follows: In Section 2, we recall some basic definitions, notation and some important theorems including Hasse-Davenport relation and Gross-Koblitz theorem. We also, derive some preliminary lemmas in Section 2 that are useful in the proof of main results. In Section 3, we prove the main theorems.

	
	\section{Notation and preliminary results}
	In this section we recall some basic definitions and some useful lemmas and some important theorems.  We begin with some results of multiplicative characters.

	\subsection{Multiplicative characters}

 Let $\widehat{\F_p^{\times}}$ be the set of all multiplicative characters of $\F_p^{\times}$.  We extend the domain of definition of each character to $\mathbb{F}_p$ by simply setting $\chi(0)=0$  including the trivial character $\varepsilon$. The following orthogonality relation of multiplicative characters is very useful in our calculations:
 
 \begin{lemma}\cite[Chapter 8]{rosen}\label{orthogonality}
 For a multiplicative character $\chi$ of $\F_p,$ we have the following
 \begin{equation}
    \sum_{x\in\mathbb{F}_p}\chi{(x)}=\begin{cases} p-1 & \text{if}\   \chi=\varepsilon;\\ 
    0  & \text  {if} \  \chi \neq \varepsilon,\\\
\end{cases}
\end{equation}
and 
\begin{equation}
\sum_{\chi\in\widehat{{\mathbb{F}}_p^\times}}\chi{(x)}=
\begin{cases}p-1 & \text  {if}\   x=1;\\ 
0  & \text  {if} \  x \neq 1.
\end{cases}
\end{equation}
 \end{lemma}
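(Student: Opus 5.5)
The statement just before this point is Lemma \ref{orthogonality}, the orthogonality relations for multiplicative characters of $\F_p$. It is a standard fact (cited from Rosen), so the plan is a short self-contained argument based on the cyclicity of $\F_p^\times$.

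\textbf{First relation.} If $\chi=\varepsilon$, then $\varepsilon(0)=0$ and $\varepsilon(x)=1$ for the $p-1$ nonzero $x$, so the sum equals $p-1$. If $\chi\neq\varepsilon$, choose $a\in\F_p^\times$ with $\chi(a)\neq 1$. Since $x\mapsto ax$ permutes $\F_p$ and fixes $0$, we get
\[
S:=\sum_{x\in\F_p}\chi(x)=\sum_{x\in\F_p}\chi(ax)=\chi(a)S,
\]
so $(1-\chi(a))S=0$ and hence $S=0$.

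\textbf{Second relation.} The only preliminary fact needed is that $\widehat{\F_p^\times}$ is cyclic of order $p-1$. Fix a generator $g$ of $\F_p^\times$. A character is then determined by $\chi(g)$, which must be a $(p-1)$-th root of unity, and every such root of unity occurs. For $x=1$ each character contributes $1$, so the sum is $p-1$. For $x=0$ the sum is $0$ by the convention $\chi(0)=0$. For $x\neq 0,1$, write $x=g^k$ with $0<k<p-1$. The character $\psi$ defined by $\psi(g)=\zeta:=e^{2\pi i/(p-1)}$ satisfies $\psi(x)=\zeta^k\neq 1$. Multiplying the sum by $\psi(x)$ merely reindexes it via $\chi\mapsto\psi\chi$, which is a bijection of the character group. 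As in the first part, this forces the sum to be zero.

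The only step needing any care is constructing a character that does not vanish to $1$ at a given $x\neq1$. This follows directly from the cyclicity of $\F_p^\times$ noted above, so no real obstacle is expected.
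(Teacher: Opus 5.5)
Your proof is correct, and it is the standard argument from Chapter 8 of Ireland--Rosen, which the paper cites in place of giving a proof: cyclicity of $\F_p^\times$ supplies a character $\psi$ with $\psi(x)\neq 1$, and the reindexing trick gives $S=\chi(a)S$ or $T=\psi(x)T$. You also correctly used the paper's convention $\varepsilon(0)=0$ rather than Ireland--Rosen's $\varepsilon(0)=1$ (which would make the first sum $p$), and you rightly treated $x=0$ separately in the second relation.
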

 Let $\zeta_p$ be the primitive $p$-th roots of unity. The additive character $\theta:\mathbb F_p\rightarrow\C^\times$ is defined by $\theta(x)=\zeta_p^x.$ 
 For a multiplicative character $\chi,$ Gauss sum, $g(\chi)$ is defined by $$g(\chi):=\sum_{x\in \mathbb F_p}\chi(x)\theta(x).$$  It is easy to see that $g(\varepsilon)= -1.$ Furthermore, Gauss sums satisfy the following identity. If $\chi\neq\varepsilon,$ then 
\begin{equation}\label{inverse}
    g(\chi)g(\overline{\chi})=
        p\cdot\chi(-1). 
        \end{equation}
        \begin{lemma}\label{lemma-fuselier} \cite[Lemma 2.2]{Fuselier}
        For $\alpha\in\F_p^{\times}$ we have 
        $$
        \theta(\alpha)=\frac{1}{p-1}\sum\limits_{\chi\in\widehat{\F_p^{\times}}}g(\overline{\chi})\chi(\alpha).
        $$
        \end{lemma}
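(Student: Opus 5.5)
The plan is to expand $\theta(\alpha)$ as a finite Fourier series in the multiplicative characters of $\F_p^{\times}$ and check that it reproduces $\theta(\alpha)$. Substituting the definition $g(\overline{\chi})=\sum_{x\in\F_p}\overline{\chi}(x)\theta(x)$ into the right-hand side gives
\begin{align*}
\frac{1}{p-1}\sum_{\chi\in\widehat{\F_p^{\times}}}g(\overline{\chi})\chi(\alpha)
=\frac{1}{p-1}\sum_{x\in\F_p}\theta(x)\sum_{\chi\in\widehat{\F_p^{\times}}}\overline{\chi}(x)\chi(\alpha).
\end{align*}
Exchanging the two finite sums is harmless.

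Next we evaluate the inner character sum with Lemma \ref{orthogonality}, treating $x=0$ and $x\neq 0$ separately. For $x\neq0$ we have $\overline{\chi}(x)\chi(\alpha)=\chi(\alpha x^{-1})$. By the second orthogonality relation the inner sum is then $p-1$ when $x=\alpha$ and $0$ otherwise.

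The term $x=0$ needs care, and it is the only real subtlety. With the convention $\chi(0)=0$ for every $\chi$, including $\varepsilon$, the term $\overline{\chi}(0)$ vanishes for all $\chi$, so $x=0$ contributes nothing. The whole expression therefore collapses to $\frac{1}{p-1}(p-1)\theta(\alpha)=\theta(\alpha)$.

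It is worth confirming this against the stated value $g(\varepsilon)=-1$. That value is exactly $\sum_{x\neq0}\theta(x)=-1$, which is consistent with $\varepsilon(0)=0$. So the convention used in the expansion matches the paper's, and no correction term arises.

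The hypothesis $\alpha\neq0$ is needed. For $\alpha=0$ the right-hand side vanishes identically while $\theta(0)=1$.
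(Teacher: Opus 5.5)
Your proof is correct and complete. Substituting the definition of $g(\overline{\chi})$, swapping the finite sums, and applying the second orthogonality relation to $\chi(\alpha x^{-1})$, with the $x=0$ term vanishing because $\chi(0)=0$, is the standard argument; the paper gives no proof of its own and cites Fuselier's Lemma 2.2, which rests on this same orthogonality computation.
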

Further details on Gauss sums can be found in \cite{Gauss-Jacobi, rosen}. The following product formula of Gauss sums is due to Hasse and Davenport. 

\begin{theorem}\cite[Hasse-Davenport relation, Theorem 11.3.5]{Gauss-Jacobi} \label{Hasse} 
Let $\chi$ be a multiplicative character of order $m$ of $\mathbb{F}_p$ for some positive integer $m$ dividing $p-1$. For any multiplicative character $\psi$ of $\mathbb{F}_p$ we have 
$$\prod\limits_{i=0}^{m-1}g(\chi^i\psi)=g(\psi^m)\psi^{-m}(m)\prod\limits_{i=1}^{m-1}g(\chi^i).$$
    
\end{theorem}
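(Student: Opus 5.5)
Since this is the classical Hasse--Davenport product formula over the prime field $\F_p$, I would not argue from first principles but translate everything into $p$-adic gamma functions, where the statement becomes the Gauss multiplication formula for $\Gamma_p$.

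\textbf{Setting up.} Write every character as a power of the Teichm\"uller character: put $a=\frac{p-1}{m}$, take $\chi=\overline{\omega}^{a}$ (any character of order $m$ is such a power, and the product over $i$ only permutes), and set $\psi=\overline{\omega}^{j}$ with $0\le j\le p-2$. The Gross--Koblitz formula, recalled later in this section, gives for $0\le k\le p-2$
$$g(\overline{\omega}^{k})=-\pi^{k}\,\Gamma_p\!\left(\tfrac{k}{p-1}\right),$$
where $\pi^{p-1}=-p$ and we fix a suitable choice of additive character. The factor $g(\chi^i\psi)$ is then $g(\overline{\omega}^{k_i})$ with $k_i=(p-1)\langle \tfrac{j}{p-1}+\tfrac{i}{m}\rangle$. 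Hence the left side equals
$$(-1)^m\,\pi^{\sum_i k_i}\prod_{i=0}^{m-1}\Gamma_p\!\left(\left\langle \tfrac{j}{p-1}+\tfrac{i}{m}\right\rangle\right).$$
Similarly, $g(\psi^m)$ is expressed through $k=(p-1)\langle \tfrac{mj}{p-1}\rangle$, and $\prod_{i=1}^{m-1} g(\chi^i)$ is expressed through $\Gamma_p(i/m)$.

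\textbf{Applying the multiplication formula.} Next I would invoke the multiplication formula for Morita's gamma function: for $x\in\Z_p$,
$$\prod_{h=0}^{m-1}\Gamma_p\!\left(\tfrac{x+h}{m}\right)=\varepsilon_m\, m^{1-x_0}\,\big(m^{-(p-1)}\big)^{x_1}\,\Gamma_p(x),$$
where $x=x_0+px_1$ with $1\le x_0\le p$, and $\varepsilon_m=\prod_{h=1}^{m-1}\Gamma_p(h/m)$. Apply it with $x=m\langle \tfrac{j}{p-1}\rangle$, reduced modulo integers appropriately so that the arguments $\langle \tfrac{j}{p-1}+\tfrac{i}{m}\rangle$ appear. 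The factor $\varepsilon_m$ exactly matches the $\Gamma_p$-part of $\prod_{i\ge1}g(\chi^i)$. The factor $\Gamma_p(x)$ matches the $\Gamma_p$-part of $g(\psi^m)$, up to integer shifts handled by the functional equation $\Gamma_p(y+1)=-y\Gamma_p(y)$ or $-\Gamma_p(y)$.

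\textbf{Matching the remaining factors.} It remains to check two things.
\begin{enumerate}
\item The exponents of $\pi$ agree on both sides, i.e.\ $\sum_i k_i$ equals $k+\sum_{i=1}^{m-1} ia$. This is a floor-function identity of Hermite type, $\sum_i\lfloor y+\tfrac{i}{m}\rfloor=\lfloor my\rfloor$, and it will also account for the signs.
\item The leftover factor $m^{1-x_0}(m^{-(p-1)})^{x_1}$ is precisely $\psi^{-m}(m)=\omega^{mj}(m)$.
\end{enumerate}
For the second point, I would use that $m^{-(p-1)}\equiv 1$ and that $\omega(m)$ is the limit of $m^{p^n}$; the combination $m^{1-x_0-(p-1)x_1}$ with $x_0$ determined by $mj$ modulo $p-1$ should collapse to the Teichm\"uller value.

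\textbf{Expected obstacle and fallback.} I expect this bookkeeping of the fractional parts, signs and the $m$-power to be the main obstacle. The safest way to pin it down is to observe that the ratio of the two sides is a function $c(j)$, and to verify $c(0)=1$ directly from $g(\varepsilon)=-1$. If the multiplication-formula route becomes unwieldy, I would instead fall back on the classical argument. One shows, via Jacobi sums and Lemma \ref{orthogonality}, that the quotient $\prod_i g(\chi^i\psi)\big/\big(g(\psi^m)\psi^{-m}(m)\big)$ is independent of $\psi$. The idea is to expand both sides as character sums counting solutions of $x^m=y$ and compare Fourier coefficients, using Lemma \ref{lemma-fuselier} and equation \eqref{inverse}. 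Evaluating the quotient at $\psi=\varepsilon$ then gives $\prod_{i=1}^{m-1} g(\chi^i)$.
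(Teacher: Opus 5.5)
The paper gives no proof of this statement; it quotes it from Berndt--Evans--Williams. Your main route (Gross--Koblitz plus the multiplication formula for $\Gamma_p$) is correct, and with the paper's own tools it closes in a few lines.

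By Theorem~\ref{gross-koblitz} both sides carry the sign $(-1)^m$. Hermite's identity is needed only for the powers of $\pi$, not for the signs: it gives $\sum_i\langle y+\frac im\rangle=\langle my\rangle+\frac{m-1}{2}$ for $y=\frac{j}{p-1}$, i.e.\ $\sum_ik_i=k+a\binom m2$. The remaining $\Gamma_p$-identity is exactly \eqref{prod-3} with $t=m$.

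The bookkeeping you anticipate disappears if you apply the multiplication formula at $x=\langle\frac{mj}{p-1}\rangle=\frac{r}{p-1}$ itself, with $0\le r\le p-2$ and $r\equiv mj \pmod{p-1}$. Since $\frac{j}{p-1}=\frac{n+x}{m}$ with $n=\lfloor\frac{mj}{p-1}\rfloor$, the numbers $\frac{x+h}{m}$, $0\le h\le m-1$, are precisely the $\langle\frac{j}{p-1}+\frac im\rangle$ in permuted order. So no functional-equation shifts arise.

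For this $x$ one has $x_0=p-r$ and $x_1=x-1$, hence $1-x_0-(p-1)x_1=0$. Write $m=\omega(m)u$ with $u\in 1+p\mathbb{Z}_p$. Then the factor $m^{1-x_0}(m^{-(p-1)})^{x_1}$ equals $\omega(m)^{1-x_0}u^{0}=\omega(m)^{r}=\psi^{-m}(m)$, which is \eqref{prod-2}. Read literally, $m^{1-x_0-(p-1)x_1}$ would be $1$; it is the splitting into Teichm\"uller and $1$-unit parts that produces $\omega(m)^r$.

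As for what each approach buys: the paper treats the classical theorem (valid over any $\mathbb{F}_q$) as a black box. Your argument derives the $\mathbb{F}_p$ case from two facts the paper already assumes. In doing so it shows that, given Gross--Koblitz, the Hasse--Davenport relation and \eqref{prod-3} are equivalent statements. You should therefore use the multiplication formula in the form proved intrinsically for $\Gamma_p$ (by continuity from integer arguments), not a version deduced from Hasse--Davenport, or the argument becomes circular.

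Your fallback is unnecessary, and as sketched it would not suffice. Checking $c(0)=1$ only helps once $c$ is known to be constant, and for $m>2$ that independence from $\psi$ is the entire content of the theorem.
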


Another important character sum is Jacobi sum. We now recall the definition of Jacobi sum and its relationship with Gauss sum. For two multiplicative characters $A,B$ of $\F_p,$ the Jacobi sum, $J(A,B)$ is defined as 
$$
J(A,B):=\sum\limits_{x\in\F_p}A(x) B(1-x).
$$
We have the following giving a nice relation between Gauss and Jacobi sums. 
\begin{lemma}\cite{greene}\label{gauss jacobi relation} Let $A,B$ be two multiplicative characters of $\mathbb{F}_p$ such that $AB\neq \varepsilon$. Then we have 
\begin{align}
J(A, B)=\frac{g(A)g(B)}{g(AB)}.\notag
\end{align}
\end{lemma}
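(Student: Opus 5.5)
The statement is the classical relation $J(A,B)=g(A)g(B)/g(AB)$ for $AB\neq\varepsilon$. I would prove it by expanding the product $g(A)g(B)$ as a double sum and reorganizing along lines $x+y=t$.

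First, write
$$g(A)g(B)=\sum_{x,y\in\F_p}A(x)B(y)\theta(x+y)$$
and group the terms according to $t=x+y$.

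For $t\neq 0$, substitute $x=tu$ and $y=t(1-u)$ with $u\in\F_p$. By multiplicativity, including the convention $\chi(0)=0$, the inner sum becomes
$$\sum_{u}A(t)B(t)A(u)B(1-u)=(AB)(t)\,J(A,B).$$
So the terms with $t\neq 0$ contribute
$$J(A,B)\sum_{t\neq0}(AB)(t)\theta(t)=J(A,B)\,g(AB),$$
using $(AB)(0)=0$.

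For $t=0$, the contribution is
$$\sum_x A(x)B(-x)=B(-1)\sum_x (AB)(x).$$
By Lemma \ref{orthogonality} this vanishes because $AB\neq\varepsilon$.

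This gives $g(A)g(B)=J(A,B)g(AB)$. Since $AB\neq\varepsilon$, we have $|g(AB)|^2=p\neq0$; this follows from \eqref{inverse} together with $\overline{g(\chi)}=\chi(-1)g(\overline\chi)$. Dividing by $g(AB)$ gives the claim.

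The one point requiring care is the trivial-character convention $\varepsilon(0)=0$. When $A$ or $B$ is trivial, the substitution still matches term by term, because the zeros are treated consistently on both sides. So no separate case analysis is needed beyond checking this convention.
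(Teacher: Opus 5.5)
Your proof is correct, and it is the standard argument: split $g(A)g(B)$ along the lines $x+y=t$, get $(AB)(t)J(A,B)$ for $t\neq 0$, and kill the $t=0$ slice by orthogonality since $AB\neq\varepsilon$. The paper gives no proof of its own and simply cites Greene for this classical relation, which rests on exactly this computation. Your point about the convention $\varepsilon(0)=0$ is the right one to flag, since that convention is what makes the cases $A=\varepsilon$ or $B=\varepsilon$ come out consistently, with $J(\varepsilon,B)=-1=g(\varepsilon)$.
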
 

For two multiplicative characters $A,B$ of $\F_p,$ the binomial coefficient ${A\choose B}$ is defined as $${A\choose B}=\frac{B(-1)}{p}J(A, \overline{B}).$$


\subsection{p-adic preliminaries}

Let $\overline{\mathbb{Q}}_p$ denote the algebraic closure of $\mathbb{Q}_p$ and $\mathbb{C}_p$ denote its the completion. It is well known that $\Z_p$ contains all the $(p-1)$-th roots of unity. Therefore, we may consider the multiplicative character $\chi:\mathbb{F}_p\rightarrow \mathbb{Z}_p.$  For a positive integer $n,$ the $p$-adic gamma function $\Gamma_p(n)$ is defined as
\begin{align}
\Gamma_p(n):=(-1)^n\prod\limits_{0<j<n,p\nmid j}j.\notag
\end{align}
The domain of definition of $p$-adic gamma function can be extended to all $x\in\mathbb{Z}_p$ by setting $\Gamma_p(0):=1$ and for $x\neq0$
\begin{align}
\Gamma_p(x):=\lim_{x_n\rightarrow x}\Gamma_p(x_n),\notag
\end{align}
where $(x_n)$ is a sequence of positive integers $p$-adically approaching to $x.$
For more details on $p$-adic analysis, see \cite{kob}. We now recall some basic properties of $p$-adic gamma functions from \cite{kob}.
For $x\in \mathbb{Z}_p$, we have 
\begin{align}\label{prod-1}
    \Gamma_p(x)\Gamma_p(1-x)=(-1)^{x_0},
\end{align}
where $x_0 \equiv x \pmod{p}$ and $ x_0\in\{1,2,3,\ldots ,p\}.$ Let $m\in \mathbb{Z}^+$ such that $p\nmid m$ and let $x=\frac{r}{p-1}$ with $0\leq r \leq p-1$ then 
\begin{align}\label{prod-2}
   \prod\limits_{h=0}^{m-1} \Gamma_p\left(\frac{x+h}{m}\right)=\omega(m^{(1-x)(1-p)})\Gamma_p(x)\prod\limits_{h=1}^{m-1}\Gamma_p\left(\frac{h}{m}\right).
\end{align}	
A refinement of \eqref{prod-2} is the following lemma.
\begin{lemma}\cite[Lemma 4.1]{McCarthyPJM} Let $p$ be an odd prime and $t\in \mathbb{Z}^+$ such that $p \nmid t$. For $0\leq j \leq p-2$, we have 
\begin{align}\label{prod-3}
   \Gamma_p\left(\left\langle\frac{tj}{p-1}\right\rangle\right)\omega(t^{tj}) \prod\limits_{h=1}^{t-1}\Gamma_p\left(\frac{h}{t}\right)=\prod\limits_{h=0}^{t-1}\Gamma_p\left(\left\langle\frac{h}{t}+\frac{j}{p-1}\right\rangle\right).
\end{align}
\end{lemma}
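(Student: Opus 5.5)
We would derive this directly from the multiplication formula \eqref{prod-2}, taking $m=t$ and choosing $x$ to be the fractional part $\left\langle \frac{tj}{p-1}\right\rangle$. Write $k=\left\lfloor \frac{tj}{p-1}\right\rfloor$. Since $0\le j\le p-2$, we have $0\le k\le t-1$. Then
$$x=\left\langle\frac{tj}{p-1}\right\rangle=\frac{r}{p-1},\qquad r=tj-k(p-1),\qquad 0\le r\le p-2.$$
So $x$ has exactly the shape required in \eqref{prod-2}, and $p\nmid t$ by hypothesis. Applying \eqref{prod-2} gives
$$\prod_{h=0}^{t-1}\Gamma_p\left(\frac{x+h}{t}\right)=\omega\left(t^{(1-x)(1-p)}\right)\Gamma_p\left(\left\langle\frac{tj}{p-1}\right\rangle\right)\prod_{h=1}^{t-1}\Gamma_p\left(\frac{h}{t}\right).$$
It then remains to identify the left-hand side with the product in the statement, and the $\omega$-factor with $\omega(t^{tj})$.

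\textbf{The product.} For each $0\le h\le t-1$, the number $\frac{x+h}{t}$ lies in $[0,1)$, because $0\le x<1$. Moreover
$$\frac{x+h}{t}=\frac{j}{p-1}+\frac{h-k}{t},$$
so $\frac{x+h}{t}$ is congruent modulo $1$ to $\frac{j}{p-1}+\frac{h'}{t}$, where $h'\in\{0,\dots,t-1\}$ is the residue of $h-k$ modulo $t$. A number in $[0,1)$ is its own fractional part, hence
$$\frac{x+h}{t}=\left\langle \frac{h'}{t}+\frac{j}{p-1}\right\rangle.$$
The map $h\mapsto h'$ is a bijection of $\{0,\dots,t-1\}$, since it is a shift by $k$ modulo $t$. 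Reindexing therefore turns the left-hand side into $\prod_{h=0}^{t-1}\Gamma_p\left(\left\langle\frac{h}{t}+\frac{j}{p-1}\right\rangle\right)$, which is the right-hand side of \eqref{prod-3}.

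\textbf{The character factor.} Here the exponent is an integer:
$$(1-x)(1-p)=-(p-1)+r=tj-(k+1)(p-1).$$
We interpret $\omega(t^{n})$ as $\omega(t)^n$. Since $\omega(t)^{p-1}=1$, this gives $\omega\left(t^{(1-x)(1-p)}\right)=\omega(t)^{tj}=\omega(t^{tj})$. Substituting both identifications into the displayed instance of \eqref{prod-2} yields \eqref{prod-3}.

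The only delicate step is the reindexing argument. One has to check that the fractional parts $\left\langle \frac{h}{t}+\frac{j}{p-1}\right\rangle$ really are the numbers $\frac{x+h}{t}$ in permuted order. This is where the choice $x=\left\langle\frac{tj}{p-1}\right\rangle$, rather than $x=\frac{tj}{p-1}$ itself, is essential: with that choice every $\frac{x+h}{t}$ already lies in $[0,1)$, so it coincides with its own fractional part. An alternative route would go through the Gross–Koblitz formula and the Hasse–Davenport relation (Theorem \ref{Hasse}), but that needs a character of order $t$, i.e.\ $t\mid p-1$. So we prefer the direct argument from \eqref{prod-2}, which works for every $t$ with $p\nmid t$.
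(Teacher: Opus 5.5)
Your proposal is correct. You apply \eqref{prod-2} with $m=t$ and $x=\langle tj/(p-1)\rangle=r/(p-1)$, reindex the product by the shift $h\mapsto h-k \pmod t$ (valid because each $(x+h)/t$ already lies in $[0,1)$), and reduce $\omega(t)^{tj-(k+1)(p-1)}=\omega(t)^{tj}$. The paper gives no separate proof: it cites the lemma from McCarthy and describes it as a refinement of \eqref{prod-2}, and your argument supplies exactly that derivation.
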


The next theorem due to Gross and Koblitz \cite{gross} is very important in our purpose. It provides a relation between Gauss sum and $p$-adic gamma function. To state this result we need to fix some notation. 
Let $\pi\in\mathbb{C}_p$ be a fixed root of $x^{p-1}+p=0$ satisfying $\pi \equiv \zeta_p-1\pmod{(\zeta_p-1)^2}$. 

\begin{theorem} \cite[Gross-Koblitz formula]{gross})\label{gross-koblitz}
    For $j\in \mathbb{Z}$ we have $$g(\overline{\omega}^j)=-\pi^{(p-1)\langle\frac{j}{p-1}\rangle}\Gamma_p\left(\left\langle\frac{j}{p-1}\right\rangle\right).$$
\end{theorem}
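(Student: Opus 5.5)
We reduce first to $0\le j\le p-2$. Both sides depend only on $j \bmod (p-1)$: $\overline{\omega}^j$ does, and so does $\langle\frac{j}{p-1}\rangle$. For $j=0$ the claim is just $g(\varepsilon)=-1=-\pi^0\Gamma_p(0)$. So assume $1\le j\le p-2$; the formula to prove is $g(\overline{\omega}^j)=-\pi^{j}\,\Gamma_p\!\left(\frac{j}{p-1}\right)$. The plan is Dwork's analytic approach: write the additive character $p$-adically, expand $g(\overline{\omega}^j)$ as an explicit series in $\pi$, and identify that series with the $p$-adic gamma value.

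\textbf{Step 1 (Dwork splitting function).} Let $E(X)=\exp\bigl(X+X^p/p\bigr)$, the Artin--Hasse/Dwork exponential. Set $\theta_\pi(X)=E(\pi X)=\sum_{n\ge0}\lambda_n X^n$. Dwork's lemma shows the $\lambda_n$ are $p$-integral and $\operatorname{ord}_p\lambda_n\ge n/(p-1)$, so the series converges on the closed unit disc. Then the choice $\pi\equiv\zeta_p-1 \pmod{(\zeta_p-1)^2}$ yields $\theta(x)=\theta_\pi(\omega(x))$ for all $x\in\F_p$. Substituting into the definition of the Gauss sum and using Lemma \ref{orthogonality} gives
$$g(\overline{\omega}^j)=\sum_{x\in\F_p^\times}\omega(x)^{-j}\sum_{n\ge0}\lambda_n\omega(x)^n=(p-1)\sum_{n\equiv j \ (\mathrm{mod}\ p-1)}\lambda_n .$$
In particular $g(\overline{\omega}^j)\equiv (p-1)\lambda_j\equiv -\pi^j/j!$ modulo higher powers of $\pi$. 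This is Stickelberger's congruence, and it matches the leading term of $-\pi^j\Gamma_p(\frac{j}{p-1})$, since $\frac{j}{p-1}\equiv -j \pmod p$ and $\Gamma_p(-j)\equiv 1/j! \pmod p$ up to the sign convention in $\Gamma_p$.

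\textbf{Step 2 (exact identification).} To pass from the congruence to an equality, I would follow Robert's/Boyarsky's route. Define $G(a)$ for $a\in\Z_p$ by the $p$-adically continuous extension of the normalized coefficient sums $-\pi^{-a}(p-1)\sum_{n\equiv a}\lambda_n$ along $a\mapsto$ its integer approximants. This requires proving that the normalized tail sums vary continuously in $a$, using the estimate on $\lambda_n$ together with the functional equation $E(X)^p=E(X^p)\cdot(\text{unit series})$. I would then show two things. First, $G$ satisfies the same functional equation as $\Gamma_p$, namely $G(a+1)=-aG(a)$ for $p\nmid a$ and $G(a+1)=-G(a)$ for $p\mid a$. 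This comes from differentiating $\theta_\pi$, which gives $X\theta_\pi'(X)=\pi(X+X^p)\theta_\pi(X)$ and hence the linear recursion $n\lambda_n=\pi(\lambda_{n-1}+\lambda_{n-p})$. Second, $G(0)=1$. Since $\Gamma_p$ is the unique continuous function on $\Z_p$ with these properties (positive integers are dense in $\Z_p$), we get $G=\Gamma_p$. Evaluating at $a=\frac{j}{p-1}$ then gives the formula.

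\textbf{Main obstacle.} The difficult step is Step 2. The recursion on the $\lambda_n$ mixes the indices $n-1$ and $n-p$, so translating it into the clean functional equation at the rational argument $\frac{j}{p-1}$ requires careful bookkeeping of the congruence classes modulo $p-1$. It also needs uniform estimates to justify the $p$-adic limit. If that bookkeeping proves too delicate, I would fall back on the cohomological proof of Gross--Koblitz via Fermat curves and Coleman's Frobenius computation. For the purposes of this paper, however, the result is simply quoted from \cite{gross}.
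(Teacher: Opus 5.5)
The paper gives no proof of this statement; it is simply quoted from \cite{gross}. So the question is whether your sketch stands on its own, and it does not: Step 2 fails. On integers your function is $G(a)=-\pi^{-a}(p-1)\sum_{n\equiv a}\lambda_n=-\pi^{-a}g(\overline{\omega}^{a})$. The Gauss sum depends only on $a \bmod (p-1)$, so $G(a+k(p-1))=(-p)^{-k}G(a)$. Along $a+p^m(p-1)\to a$ this gives $|G|_p\to\infty$, so there is no continuous extension to $\Z_p$. The functional equation is also false for this $G$. You have $G(0)=-g(\varepsilon)=1$, so $G(1)=-G(0)$ would force $G(1)=-1$, whereas your own Step 1 gives $G(1)=-\pi^{-1}g(\overline{\omega})\equiv 1\pmod{\pi}$.

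The root problem is a mismatch of scales. The Gauss sum is indexed by $j$, while the argument of $\Gamma_p$ is $j/(p-1)$, so $j\mapsto j+1$ moves that argument by $1/(p-1)$, not by $1$. No relation between Gauss sums at consecutive $j$ can yield $\Gamma_p(x+1)=-x\Gamma_p(x)$. Likewise, ``evaluating at $a=j/(p-1)$'' has nothing to do with $g(\overline{\omega}^j)$, which your definition places at $a=j$.

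There are also two smaller slips in Step 1. First, $\exp(X+X^p/p)$ is not $p$-integral: its $X^{p^2}$-coefficient has valuation $-2$, so $\operatorname{ord}_p\lambda_{p^2}=\frac{p^2}{p-1}-2$ and the bound $\operatorname{ord}_p\lambda_n\ge n/(p-1)$ fails. Dwork's bound $\operatorname{ord}_p\lambda_n\ge n(p-1)/p^2$ suffices for convergence and for Stickelberger. Second, since $\pi^p=-p\pi$, the differential equation is $X\theta_\pi'=\pi(X-pX^p)\theta_\pi$, i.e.\ $n\lambda_n=\pi\lambda_{n-1}-p\pi\lambda_{n-p}$.

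What is missing is Dwork's map $a\mapsto a'$ and the Frobenius constant attached to it; this is the mechanism behind the Boyarsky route you mention. For $a\in\Z_p$ let $a_0\in\{0,\dots,p-1\}$ with $a_0\equiv -a\pmod p$, put $a'=(a+a_0)/p$, and set $\gamma(a)=\sum_{m\ge0}\lambda_{a_0+pm}(-\pi)^{-m}a'(a'+1)\cdots(a'+m-1)$. Let $\alpha=\psi\circ\theta_\pi$, where $\psi$ keeps the monomials $X^{c}$ with $c/p\in a'+\Z$ and sends them to $X^{c/p}$. Then $\gamma(a)$ is the scalar by which $\alpha$ sends $X^{a}$ to $X^{a'}$ modulo the image of $D=X\frac{d}{dX}+\pi X$, using $X^{b+1}\equiv-\frac{b}{\pi}X^{b}$ and $\alpha D=pD\alpha$.

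The function $h(a)=\pi^{-a_0}\gamma(a)$ is continuous on $\Z_p$ with $h(0)=1$. It satisfies $h(a+1)=-a\,h(a)$ when $p\nmid a$ and $h(a+1)=-h(a)$ when $p\mid a$, so $h=\Gamma_p$. Here the shift $a\mapsto a+1$ is multiplication by $X$, which is why the $\Gamma_p$ functional equation genuinely appears.

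The Gauss sum enters only at the fixed point $a=a'=j/(p-1)$, where $a_0=j$. Let $L$ be the space of (overconvergent) series $X^{a}\sum_{n\ge0}c_nX^n$. Then $g(\overline{\omega}^j)=(p-1)\sum_{k\ge0}\lambda_{j+k(p-1)}=(p-1)\operatorname{Tr}(\alpha\mid L)$. The exact sequence $0\to L\xrightarrow{D}L\to L/DL\to0$, with $p\alpha$ acting on the first copy, gives $(1-p)\operatorname{Tr}(\alpha\mid L)=\gamma(a)$. Hence $g(\overline{\omega}^j)=-\gamma(\frac{j}{p-1})=-\pi^{j}\Gamma_p(\frac{j}{p-1})$. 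This fixed-point trace identity is the step that turns Stickelberger's congruence into an equality, and your plan has no substitute for it.
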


Next we state some preliminary lemmas for the ease of calculation in the proofs of main results. 

\begin{lemma}\label{exponent-1}
For $0\leq r,s\leq p-2 $ we have 
$$\left\lfloor\frac{2s+r}{p-1}\right\rfloor=\left\lfloor\frac{r}{2(p-1)}+\frac{s}{p-1}\right\rfloor+\left\lfloor\frac{r}{2(p-1)}+\frac{s}{p-1}+\frac{1}{2}\right\rfloor.$$ 
 \end{lemma}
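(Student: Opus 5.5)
This is the classical identity $\lfloor 2x\rfloor=\lfloor x\rfloor+\lfloor x+\tfrac12\rfloor$ (Hermite's identity with $n=2$), applied to $x=\frac{r}{2(p-1)}+\frac{s}{p-1}$. Indeed $2x=\frac{r+2s}{p-1}$, so the left-hand side of the lemma is exactly $\lfloor 2x\rfloor$, and the right-hand side is $\lfloor x\rfloor+\lfloor x+\frac12\rfloor$.

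To prove Hermite's identity for $n=2$, write $x=m+\delta$ with $m=\lfloor x\rfloor\in\Z$ and $0\le\delta<1$, and split into two cases.

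If $0\le\delta<\frac12$, then $\lfloor x+\frac12\rfloor=m$ and $\lfloor 2x\rfloor=2m+\lfloor 2\delta\rfloor=2m$. Both sides equal $2m$.

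If $\frac12\le\delta<1$, then $\lfloor x+\frac12\rfloor=m+1$ and $\lfloor 2x\rfloor=2m+1$. Again both sides agree.

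Nothing about the range $0\le r,s\le p-2$ is actually needed; it only shows that $x\in[0,\frac32)$, so $m\in\{0,1\}$ and the left side lies in $\{0,1,2\}$. One could alternatively verify the statement by hand over the subranges of $2s+r$, namely $[0,p-1)$, $[p-1,2(p-1))$ and $[2(p-1),3(p-1))$. There is no real obstacle here; the only care needed is to confirm that $2x$ is exactly $\frac{2s+r}{p-1}$.
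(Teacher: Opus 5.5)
Your proof is correct and is essentially the paper's argument. The paper also does a direct case split, but on which of the intervals $[0,1),[1,2),[2,3)$ contains $\frac{2s+r}{p-1}$, using the bounds on $r,s$. You instead split on the fractional part of $x=\frac{r}{2(p-1)}+\frac{s}{p-1}$, which proves $\lfloor 2x\rfloor=\lfloor x\rfloor+\lfloor x+\frac12\rfloor$ for all real $x$ and shows that the range hypothesis is not needed.
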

\begin{proof}
Since $0\leq\frac{s}{p-1}, \frac{r}{p-1}<1,$ so we have  $0\leq\frac{2s}{p-1}+\frac{r}{p-1}<3.$ Therefore, we complete the proof by considering term $\frac{2s}{p-1}+\frac{r}{p-1}$ in three disjoint intervals. First suppose that 
  $0\leq\frac{2s}{p-1}+\frac{r}{p-1}<1. $ This implies that $0\leq\frac{s}{p-1}+\frac{r}{2(p-1)}<\frac{1}{2}$ and $\frac{1}{2}\leq\frac{s}{p-1}+\frac{r}{2(p-1)}+\frac{1}{2}<1.$ Now using these three inequalities we conclude that  
 $$ \left\lfloor\frac{2s}{p-1}+\frac{r}{p-1}\right\rfloor=0\ \text{and}\ \left\lfloor\frac{r}{2(p-1)}+\frac{s}{p-1}\right]+\left\lfloor\frac{r}{2(p-1)}+\frac{s}{p-1}+\frac{1}{2}\right\rfloor=0.$$ 
  Using similar arguments we conclude the lemma for the cases $1\leq\frac{2s}{p-1}+ \frac{r}{p-1}<2$ and $2\leq\frac{2s}{p-1}+ \frac{r}{p-1}<3.$
\end{proof}
\begin{lemma}\label{exponent-2}
For $0\leq r,s \leq p-2$ we have 
 $$\left\lfloor-\frac{1}{2}-\frac{s}{p-1}-\frac{r}{p-1}\right\rfloor=\begin{cases}
    -1+\left\lfloor\left\langle-\frac{1}{2}-\frac{s}{p-1}\right\rangle-\frac{r}{p-1}\right\rfloor &  \ \text{if}\  0\leq s\leq \frac{p-1}{2}; \vspace{.1 cm}\\  
     -2+\left\lfloor\left\langle-\frac{1}{2}-\frac{s}{p-1}\right\rangle-\frac{r}{p-1}\right\rfloor & \ \text{if} \ \frac{p-1}{2}<s<p-1.
\end{cases}$$
\end{lemma}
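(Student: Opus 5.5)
We reduce the statement to an elementary computation with fractional parts, following the pattern of Lemma \ref{exponent-1}. Write $x=\frac{s}{p-1}$ and $y=\frac{r}{p-1}$, so that $0\leq x,y<1$. The plan is to compute $\left\langle-\frac{1}{2}-x\right\rangle$ explicitly in the two ranges of $s$. Then we compare $-\frac{1}{2}-x-y$ with $\left\langle-\frac{1}{2}-x\right\rangle-y$. These two quantities differ by an integer, and the floor function commutes with integer shifts, which gives the result.

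\emph{Case} $0\leq s\leq\frac{p-1}{2}$. Here $0\leq x\leq\frac{1}{2}$, so $-1\leq -\frac{1}{2}-x\leq -\frac{1}{2}$. When $x<\frac12$ this gives $\left\langle-\frac{1}{2}-x\right\rangle=\frac{1}{2}-x$. Then
$$-\frac{1}{2}-x-y=-1+\left(\frac{1}{2}-x-y\right)=-1+\left\langle-\frac12-x\right\rangle-y,$$
and applying $\lfloor n+z\rfloor=n+\lfloor z\rfloor$ for $n\in\Z$ gives the first line of the lemma.

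\emph{Case} $\frac{p-1}{2}<s\leq p-2$. Here $\frac12<x<1$, so $-\frac{3}{2}<-\frac12-x<-1$, hence $\left\langle-\frac{1}{2}-x\right\rangle=\frac{3}{2}-x$. Then
$$-\frac12-x-y=-2+\left(\frac32-x\right)-y,$$
and taking floors gives the second line.

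The only delicate point is the boundary value $s=\frac{p-1}{2}$, where $-\frac12-x=-1$ is an integer. In that case $\langle -1\rangle=0$, not $\frac12-x=0$… in fact these agree, since $\frac12-\frac12=0$. So the formula $\left\langle-\frac12-x\right\rangle=\frac12-x$ still holds, and the first case covers $s=\frac{p-1}{2}$, consistent with the lemma's split. I expect no real obstacle beyond checking this endpoint. The integer-shift identity for the floor function makes everything else immediate.
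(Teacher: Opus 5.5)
Your proof is correct and takes essentially the paper's approach: the paper's proof is just an elementary interval-by-interval case check, as in Lemma \ref{exponent-1}. Your version splits only on the range of $s$, computes $\left\langle -\frac12 - x\right\rangle$ explicitly, and uses $\lfloor n+z\rfloor=n+\lfloor z\rfloor$, which is a clean way to carry this out and correctly handles the endpoint $s=\frac{p-1}{2}$, where $\langle -1\rangle = 0 = \frac12 - x$.
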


\begin{proof}
Proceeding similar steps as in the proof of Lemma \ref{exponent-1}, we complete the proof.
\end{proof}
Furthermore, we need the following lemma.

\begin{lemma}\label{exponent-3}
For $1\leq r \leq p-2$ and $0\leq s \leq p-2$ we have $$\left\lfloor-\frac{1}{2}-\frac{r}{p-1}\right\rfloor-\left\lfloor\frac{-r-s}{p-1}-\frac{1}{2}\right\rfloor=-\left\lfloor\left\langle-\frac{1}{2}-\frac{r}{p-1}\right\rangle-\frac{s}{p-1}\right\rfloor.$$
\end{lemma}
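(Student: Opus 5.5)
We prove Lemma \ref{exponent-3} by elementary case analysis, as in Lemmas \ref{exponent-1} and \ref{exponent-2}. Write $a=\frac{r}{p-1}$ and $b=\frac{s}{p-1}$, so that $0<a<1$ (since $r\geq 1$) and $0\leq b<1$. The left-hand side then reads $\lfloor -\frac12-a\rfloor-\lfloor -\frac12-a-b\rfloor$. The natural move is to write $-\frac12-a=\lfloor-\frac12-a\rfloor+\langle-\frac12-a\rangle$. Since an integer shift passes through the floor function,
\[
\left\lfloor -\tfrac12-a-b\right\rfloor=\left\lfloor -\tfrac12-a\right\rfloor+\left\lfloor \left\langle-\tfrac12-a\right\rangle-b\right\rfloor .
\]
Subtracting this from $\lfloor -\frac12-a\rfloor$ gives exactly $-\lfloor\langle-\frac12-a\rangle-b\rfloor$, which is the right-hand side. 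This is really the whole proof: it is the identity $\lfloor n+x\rfloor=n+\lfloor x\rfloor$ for $n\in\Z$.

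One point deserves care. The statement as printed contains $\frac{-r-s}{p-1}$, so we should read the second floor as $\lfloor -\frac12-a-b\rfloor$ and not as something involving $\langle\cdot\rangle$ of the sum. Under that reading the argument above goes through for all $r,s$. The restriction $1\leq r$ only matters if one wants explicit values.

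If one prefers to mirror the style of the earlier lemmas, the same result can be reached by splitting cases on $a$. If $0<a<\frac12$, then $\lfloor-\frac12-a\rfloor=-1$ and $\langle-\frac12-a\rangle=\frac12-a$. If $\frac12\leq a<1$, then $\lfloor-\frac12-a\rfloor=-2$ and $\langle-\frac12-a\rangle=\frac32-a$. In each case one then sub-splits according to whether $b$ is below or at least the fractional part $\langle-\frac12-a\rangle$, and checks both sides agree: the right-hand side is $0$ in the first sub-case and $1$ in the second. There is no real obstacle here. The only thing to watch is that the fractional part is taken in $[0,1)$ and that the boundary case $a=\frac12$ is handled correctly: there $\langle-1\rangle=0$, and the formula still holds.
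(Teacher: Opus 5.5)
Your main argument is correct, and it takes a genuinely different, cleaner route than the paper. The paper's entire proof is ``proceeding similar steps as in the proof of Lemma \ref{exponent-1}'', i.e.\ splitting $\frac{r}{p-1}$ and $\frac{s}{p-1}$ into subintervals and evaluating each floor explicitly. You instead set $n=\lfloor -\frac12-a\rfloor$ and $f=\langle -\frac12-a\rangle$, and use $\lfloor n+f-b\rfloor=n+\lfloor f-b\rfloor$. This proves the identity in one line for all real $a,b$, so the hypotheses $1\leq r\leq p-2$ and $0\leq s\leq p-2$ are not needed at all. Adding the evaluation $\lfloor -\frac12-\frac{s}{p-1}\rfloor\in\{-1,-2\}$ also gives Lemma \ref{exponent-2} at once. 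The case analysis does produce explicit values of each side, but it pays for them with boundary bookkeeping.

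That bookkeeping is exactly where your optional second sketch slips, although the slips do not affect your first argument, which stands on its own.

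\begin{itemize}
\item At $a=\frac12$ (i.e.\ $r=\frac{p-1}{2}$) you have $\lfloor -\frac12-a\rfloor=\lfloor -1\rfloor=-1$, not $-2$. Also $\langle -1\rangle=0=\frac12-a$, so this value belongs to the first case. The correct split is $0<a\leq\frac12$ versus $\frac12<a<1$.
\item If $b=f$, then $\lfloor f-b\rfloor=0$ and the right-hand side is $0$, not $1$. This case does occur, e.g.\ $s=\frac{p-1}{2}-r$ for $r\leq\frac{p-1}{2}$. The sub-split should be $b\leq f$ versus $b>f$.
\end{itemize}
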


\begin{proof}
Proceeding similar steps as in the proof of Lemma \ref{exponent-1}, we complete the proof.
\end{proof}
		
We also use the following three lemmas for the ease of calculation:

\begin{lemma}
For $0\leq r\leq p-2$ we have 
\begin{align}\label{relation-1}
    &\Gamma_p\left(\left\langle\frac{r}{2(p-1)}\right\rangle\right)\Gamma_p\left(\left\langle\frac{r}{2(p-1)}+\frac{1}{2}\right\rangle\right) \Gamma_p\left(\left\langle\frac{-r}{p-1} -\frac{1}{2}\right\rangle\right)\notag\\
    &=p\phi(-1)g(\phi)\Gamma_p\left(\frac{1}{2}\right)(-p)^{\frac{1}{2}+\lfloor-\frac{1}{2}-\frac{r}{p-1}\rfloor}{\omega}^r(-2)\binom{\overline{\omega}^r}{\overline{\omega}^r\phi}.
\end{align}
\end{lemma}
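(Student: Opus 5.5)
The plan is to convert every factor into Gauss sums and then collapse the product with Hasse--Davenport and the Gauss--Jacobi relation. Apply Lemma \eqref{prod-3} with $t=2$ and $j=r$. This gives
\[
\Gamma_p\!\left(\left\langle\tfrac{r}{2(p-1)}\right\rangle\right)\Gamma_p\!\left(\left\langle\tfrac{r}{2(p-1)}+\tfrac12\right\rangle\right)
\]
directly in terms of $\Gamma_p(\langle \tfrac{2r}{p-1}\rangle)$. Strictly speaking, \eqref{prod-3} is stated with $\langle \tfrac{h}{t}+\tfrac{j}{p-1}\rangle$. Our arguments are $\langle \tfrac{r}{2(p-1)}\rangle$ instead, so the cleaner route is to go through Gauss sums.

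\emph{Step 1 (Gauss sums).} By Theorem \ref{gross-koblitz}, each $\Gamma_p(\langle \tfrac{j}{p-1}\rangle)$ equals $-g(\overline{\omega}^j)\pi^{-(p-1)\langle j/(p-1)\rangle}$. Apply this to the three factors:
\begin{itemize}
\item $\Gamma_p(\langle -\tfrac{r}{p-1}-\tfrac12\rangle)$ corresponds to $g(\omega^r\phi)$, since $\phi=\omega^{(p-1)/2}$;
\item $\Gamma_p(\tfrac12)$ corresponds to $g(\phi)$.
\end{itemize}
The two half-arguments are different. $\tfrac{r}{2(p-1)}$ is a $(p-1)$-th-fraction only after rewriting. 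Note that $\tfrac{r}{2(p-1)}$ and $\tfrac{r}{2(p-1)}+\tfrac12$ are exactly $\tfrac{x+h}{2}$ with $x=\tfrac{r}{p-1}$ and $h=0,1$. So \eqref{prod-2} with $m=2$ gives their product as $\omega(2^{(1-x)(1-p)})\Gamma_p(\tfrac{r}{p-1})\Gamma_p(\tfrac12)$. The factor $\omega(2^{(1-x)(1-p)})$ simplifies to $\omega^{r}(2)$, up to the convention $\omega(2^{p-1})=1$.

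\emph{Step 2 (Gross--Koblitz again and Jacobi sum).} The left side is now $\omega^r(2)\Gamma_p(\tfrac{r}{p-1})\Gamma_p(\tfrac12)\Gamma_p(\langle -\tfrac{r}{p-1}-\tfrac12\rangle)$. Converting by Theorem \ref{gross-koblitz} gives $g(\overline{\omega}^r)$, $g(\omega^r\phi)$ and powers of $\pi$. Since $\pi^{p-1}=-p$, the $\pi$-exponents combine into $(-p)$ to the power $\tfrac{r}{p-1}+\langle -\tfrac12-\tfrac{r}{p-1}\rangle$. This equals $\tfrac12+\lfloor -\tfrac12-\tfrac{r}{p-1}\rfloor+$ const, which is where the exponent in the statement comes from. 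It remains to compute
\[
\binom{\overline{\omega}^r}{\overline{\omega}^r\phi}=\frac{\omega^r\phi(-1)}{p}\,J(\overline{\omega}^r,\omega^r\phi).
\]
By Lemma \ref{gauss jacobi relation} (valid since $\overline{\omega}^r\omega^r\phi=\phi\neq\varepsilon$), $J=g(\overline{\omega}^r)g(\omega^r\phi)/g(\phi)$. Hence $g(\overline{\omega}^r)g(\omega^r\phi)=p\,\omega^r\phi(-1)g(\phi)\binom{\overline{\omega}^r}{\overline{\omega}^r\phi}$. Substituting this, and keeping the one $\Gamma_p(\tfrac12)$ from Step 1, produces exactly $p\phi(-1)g(\phi)\Gamma_p(\tfrac12)\omega^r(-2)\binom{\cdot}{\cdot}$. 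The character $\omega^r(-1)$ combines with $\omega^r(2)$ to give $\omega^r(-2)$.

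\emph{Main obstacle.} The main difficulty is bookkeeping of signs and $p$-powers, which I will handle case by case:
\begin{itemize}
\item The Gross--Koblitz minus signs: three factors give $(-1)^3$, and dividing by $g(\phi)$ contributes another.
\item The exact exponent of $(-p)$, splitting into the cases $r<\tfrac{p-1}{2}$ and $r\ge\tfrac{p-1}{2}$ as in Lemma \ref{exponent-2}.
\item The edge case $r=0$, where $g(\varepsilon)=-1$ rather than Gross--Koblitz with a nonzero fractional part.
\end{itemize}
I expect the $r=0$ case and the half-integer power $(-p)^{1/2}$ (interpreted via $\pi^{(p-1)/2}$) to need separate care.
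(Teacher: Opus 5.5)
Your argument is exactly the paper's proof: apply \eqref{prod-2} with $m=2$ and $x=\frac{r}{p-1}$ to get $\omega^r(2)\Gamma_p(\langle\frac{r}{p-1}\rangle)\Gamma_p(\frac12)$, use Gross--Koblitz on $\Gamma_p(\langle\frac{r}{p-1}\rangle)$ and $\Gamma_p(\langle-\frac{r}{p-1}-\frac12\rangle)$ to get $g(\overline{\omega}^r)g(\omega^r\phi)$, then finish with Lemma~\ref{gauss jacobi relation} and the definition of the binomial coefficient (the \eqref{prod-3} detour you abandon is not needed). The bookkeeping you flag closes with no case split: only two Gross--Koblitz signs occur because $\Gamma_p(\frac12)$ is kept as is, and $r=0$ (and $r=\frac{p-1}{2}$) is covered since Gross--Koblitz holds for $j=0$ with $g(\varepsilon)=-1=-\Gamma_p(0)$; the $(-p)$-exponent is exactly $-\bigl(\frac{r}{p-1}+\langle-\frac12-\frac{r}{p-1}\rangle\bigr)=\frac12+\lfloor-\frac12-\frac{r}{p-1}\rfloor$, whereas you wrote it with the opposite sign and a spurious constant.
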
	

\begin{proof}	
If we apply \eqref{prod-2} for $x=\frac{r}{p-1}$ and $m=2,$ then $$\Gamma_p\left(\left<\frac{r}{2(p-1)}\right>\right)\Gamma_p\left(\left<\frac{r}{2(p-1)}+\frac{1}{2}\right>\right)=w(2^{1-p+r})\Gamma_p\left(\left<\frac{r}{p-1}\right>\right)\Gamma_p\left(\frac{1}{2}\right).$$ Now, replacing the first two terms of the left side of \eqref{relation-1} by the above identity and then using Gross-Koblitz formula (Theorem \ref{gross-koblitz}) we simplify further to obtain the desired result.
\end{proof}

\begin{lemma}\label{relation-2}

For a prime $p>3$ we have 
\begin{align}
&\sum_{s=0}^{p-2}(-p)^{-\left\lfloor\frac{-s}{p-1}\right\rfloor-\left\lfloor\frac{s}{p-1}\right\rfloor-\left\lfloor\frac{s}{p-1}+\frac{1}{2}\right\rfloor-\left\lfloor-\frac{1}{2}-\frac{s}{p-1}\right\rfloor}\frac{\Gamma_p\left(\left\langle\frac{-s}{p-1}\right\rangle\right)\Gamma_p\left(\left\langle\frac{s}{p-1}\right\rangle\right)\Gamma_p\left(\left\langle\frac{s}{p-1}+\frac{1}{2}\right\rangle\right)\Gamma_p\left(\left\langle\frac{-s}{p-1}-\frac{1}{2}\right\rangle\right)}{\Gamma_p\left(\frac{1}{2}\right) \Gamma_p\left(\frac{1}{2}\right)}\notag\\
&=-p(p-1)(p-2).\notag
\end{align}
\end{lemma}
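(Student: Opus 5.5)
The plan is to evaluate the sum term by term. The four $\Gamma_p$-factors in the numerator come in two reflection pairs: $x=\langle\frac{s}{p-1}\rangle$ with $\langle\frac{-s}{p-1}\rangle$, and $y=\langle\frac{s}{p-1}+\frac12\rangle$ with $\langle-\frac{s}{p-1}-\frac12\rangle$. Each pair collapses by \eqref{prod-1} unless its arguments are integers. So I will split the range $0\le s\le p-2$ into three parts: $s=0$, $s=\frac{p-1}{2}$, and the remaining $p-3$ values of $s$. By \eqref{prod-1} with $x=\frac12$ (where $x_0=\frac{p+1}{2}$), the common denominator is
$$\Gamma_p\left(\tfrac12\right)^2=(-1)^{\frac{p+1}{2}}.$$

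\emph{Exceptional values.} For $s=0$ the exponent is $-0-0-0-\lfloor-\frac12\rfloor=1$. The numerator is $\Gamma_p(0)^2\Gamma_p(\frac12)^2$, which cancels the denominator, so the term is $-p$. For $s=\frac{p-1}{2}$, the first pair contributes $-\lfloor\frac{-s}{p-1}\rfloor-\lfloor\frac{s}{p-1}\rfloor=1$ to the exponent. The second pair contributes $-\lfloor 1\rfloor-\lfloor -1\rfloor=0$, and its $\Gamma_p$-values are $\Gamma_p(0)^2=1$. The first pair gives $(-1)^{x_0}$ with $x_0\equiv\frac{s}{p-1}\equiv -s\pmod p$, i.e. $x_0=p-s=\frac{p+1}{2}$. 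Dividing by $\Gamma_p(\frac12)^2=(-1)^{\frac{p+1}{2}}$, this term is again $-p$.

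\emph{Generic values.} For $s\notin\{0,\frac{p-1}{2}\}$ neither pair has integer arguments, so each pair contributes exponent $1$ (since $\lfloor z\rfloor+\lfloor -z\rfloor=-1$ for $z\notin\Z$), giving $(-p)^2=p^2$. By \eqref{prod-1} the numerator is $(-1)^{x_0+y_0}$, with $x_0=p-s$ as before. For $y_0$, note that $\frac12\equiv\frac{p+1}{2}\pmod p$, so $y_0\equiv\frac{p+1}{2}-s\pmod p$. Hence
$$y_0=\begin{cases}\frac{p+1}{2}-s, & s<\frac{p-1}{2},\\ \frac{3p+1}{2}-s, & s>\frac{p-1}{2}.\end{cases}$$
In both cases $x_0+y_0\equiv\frac{3p+1}{2}\equiv\frac{p+1}{2}+1\pmod 2$, since $p$ is odd. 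So each generic term equals $p^2(-1)^{\frac{p+1}{2}+1}/(-1)^{\frac{p+1}{2}}=-p^2$.

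Summing the three parts gives $-p^2(p-3)-2p=-p^3+3p^2-2p=-p(p-1)(p-2)$, which is the claim. The main care point is the parity bookkeeping for $y_0$ across the two ranges of $s$. There one has to use the correct representative of $\frac12$ modulo $p$ and the convention $x_0\in\{1,\dots,p\}$ in \eqref{prod-1}. The floor exponents are routine and in the spirit of Lemma \ref{exponent-1}.
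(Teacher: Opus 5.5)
Your strategy is sound and matches the paper's: evaluate term by term, with $s=0$ and $s=\frac{p-1}{2}$ each giving $-p$ and the remaining $p-3$ terms each giving $-p^2$. Your exponent counts, $x_0=p-s$, $\Gamma_p(\frac12)^2=(-1)^{\frac{p+1}{2}}$, and both exceptional terms are correct.

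\textbf{The error.} The step you flagged as the delicate one is wrong as written. For $\frac{p-1}{2}<s\le p-2$ you have $\langle\frac{s}{p-1}+\frac12\rangle=\frac{s}{p-1}-\frac12$. The residue therefore shifts by $-1$, so $y_0\equiv\frac{p-1}{2}-s\pmod p$, i.e.\ $y_0=\frac{3p-1}{2}-s$, not $\frac{3p+1}{2}-s$. For example, with $p=5$, $s=3$ you get $y=\frac14\equiv 4$, while your formula gives $5$.

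\textbf{A second slip masks it.} Your sentence ``in both cases $x_0+y_0\equiv\frac{3p+1}{2}\pmod 2$'' does not follow from the $y_0$ you wrote. Indeed $(p-s)+(\frac{3p+1}{2}-s)=\frac{5p+1}{2}-2s$ differs from $\frac{3p+1}{2}$ by the odd number $p$. Taken literally, every term with $s>\frac{p-1}{2}$ would then be $+p^2$, and the total would be wrong.

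\textbf{The fix.} With the correct value, $x_0+y_0=\frac{5p-1}{2}-2s$ differs from $\frac{3p+1}{2}$ by the even number $p-1$. So every generic term is indeed $-p^2$ and the lemma follows. The repair is one line, but as submitted the sign in the range $s>\frac{p-1}{2}$ is not justified.

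\textbf{How the paper avoids this.} It applies \eqref{prod-1} only to the unshifted pair, where $(-1)^{x_0}=(-1)^{p-s}=-\overline{\omega}^s(-1)$. For the shifted pair it uses Gross--Koblitz and \eqref{inverse}. For $s\neq0,\frac{p-1}{2}$ this gives
\[
\Gamma_p\bigl(\langle\tfrac{s}{p-1}+\tfrac12\rangle\bigr)\,\Gamma_p\bigl(\langle-\tfrac{s}{p-1}-\tfrac12\rangle\bigr)=(-p)^{-1}g(\overline{\omega}^s\phi)\,g(\omega^s\phi)=-\overline{\omega}^s\phi(-1).
\]
The sign thus appears as a character value, uniformly in $s$, with no residue representative to choose. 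Your all-\eqref{prod-1} route is more elementary, but it is exactly where that bookkeeping went wrong.
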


\begin{proof}
Let $B=\sum\limits_{s=0}^{p-2}(-p)^{-\left\lfloor\frac{-s}{p-1}\right\rfloor-\left\lfloor\frac{s}{p-1}\right\rfloor-\left\lfloor\frac{s}{p-1}+\frac{1}{2}\right\rfloor-\left\lfloor-\frac{1}{2}-\frac{s}{p-1}\right\rfloor}\frac{\Gamma_p\left(\left\langle\frac{-s}{p-1}\right\rangle\right)\Gamma_p\left(\left\langle\frac{s}{p-1}\right\rangle\right)\Gamma_p\left(\left\langle\frac{s}{p-1}+\frac{1}{2}\right\rangle\right)\Gamma_p\left(\left\langle\frac{-s}{p-1}-\frac{1}{2}\right\rangle\right)}{\Gamma_p\left(\frac{1}{2}\right) \Gamma_p\left(\frac{1}{2}\right)}.$

It is clear that the term under the summation for $s=0$ is $-p.$ If $s\neq0,$ then using Gross-Koblitz formula (Theorem \ref{gross-koblitz}) we have 
$$\Gamma_p\left(\left\langle\frac{s}{p-1}+\frac{1}{2}\right\rangle\right)\Gamma_p\left(\left\langle\frac{-s}{p-1}-\frac{1}{2}\right\rangle\right)=(-p)^{\left\lfloor\frac{s}{p-1}+\frac{1}{2}\right\rfloor+\left\lfloor-\frac{1}{2}-\frac{s}{p-1}\right\rfloor}
g(\overline{\omega}^s\phi)g({\omega}^{s}\phi).$$
Now, substituting the above identity in the expression of $B$ and using \eqref{prod-1} we simplify further to obtain 
\begin{align}
B&=-p+\sum_{s=1,\ s\neq \frac{p-1}{2}}^{p-2} \frac{p\ \overline{\omega}^s(-1)p\ {\overline{\omega}^s\phi}(-1)}{\Gamma_p(\frac{1}{2})\Gamma_p(\frac{1}{2})}+\frac{p\phi(-1)}{\Gamma_p(\frac{1}{2})\Gamma_p(\frac{1}{2})}
=-p(p-1)(p-2).\notag
\end{align}
\end{proof}
\begin{lemma}\label{lemma-100}
For $1\leq s\leq p-2$ such that $s\neq \frac{p-1}{2},$ we have 
$$\Gamma_p\left(\left\langle\frac{-s}{p-1}-\frac{1}{2}\right\rangle\right)\Gamma_p\left(\left\langle\frac{-s}{p-1}\right\rangle\right) \Gamma_p\left(\left\langle\frac{2s}{p-1}\right\rangle\right)=-{\Gamma_p\left(\frac{1}{2}\right)}\overline{\omega}^s(4).$$
\end{lemma}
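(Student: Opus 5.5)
The plan is to reduce everything to the multiplication formula \eqref{prod-3} with $t=2$ and the reflection formula \eqref{prod-1}. Applying \eqref{prod-3} with $t=2$ and $j=s$ gives
\begin{align*}
\Gamma_p\left(\left\langle\tfrac{2s}{p-1}\right\rangle\right)\omega(2^{2s})\Gamma_p\left(\tfrac12\right)=\Gamma_p\left(\left\langle\tfrac{s}{p-1}\right\rangle\right)\Gamma_p\left(\left\langle\tfrac{s}{p-1}+\tfrac12\right\rangle\right).
\end{align*}
Since $\omega(2^{2s})=\omega^s(4)$, I can solve for $\Gamma_p(\langle\frac{2s}{p-1}\rangle)$:
\begin{align*}
\Gamma_p\left(\left\langle\tfrac{2s}{p-1}\right\rangle\right)=\frac{\overline{\omega}^s(4)}{\Gamma_p(\frac12)}\,\Gamma_p\left(\left\langle\tfrac{s}{p-1}\right\rangle\right)\Gamma_p\left(\left\langle\tfrac{s}{p-1}+\tfrac12\right\rangle\right).
\end{align*}
I then substitute this into the left-hand side of the lemma.

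After the substitution, the left-hand side becomes $\frac{\overline{\omega}^s(4)}{\Gamma_p(1/2)}$ times two products of reflection type,
\begin{align*}
\Gamma_p\left(\left\langle\tfrac{-s}{p-1}\right\rangle\right)\Gamma_p\left(\left\langle\tfrac{s}{p-1}\right\rangle\right)\quad\text{and}\quad \Gamma_p\left(\left\langle\tfrac{-s}{p-1}-\tfrac12\right\rangle\right)\Gamma_p\left(\left\langle\tfrac{s}{p-1}+\tfrac12\right\rangle\right).
\end{align*}
Because $1\le s\le p-2$, the quantity $\langle\frac{s}{p-1}\rangle$ is nonzero, so $\langle\frac{-s}{p-1}\rangle=1-\langle\frac{s}{p-1}\rangle$. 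Because $s\neq\frac{p-1}{2}$, also $\langle\frac{s}{p-1}+\frac12\rangle\neq0$, so $\langle\frac{-s}{p-1}-\frac12\rangle=1-\langle\frac{s}{p-1}+\frac12\rangle$. This is exactly where the excluded values of $s$ are used. By \eqref{prod-1}, each product is therefore a sign $(-1)^{x_0}$, where $x_0\in\{1,\dots,p\}$ is the residue mod $p$ of the relevant argument.

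To compute these residues, use $\frac{1}{p-1}\equiv-1\pmod p$ and $\frac12\equiv\frac{p+1}{2}\pmod p$. For $x=\frac{s}{p-1}$ this gives $x_0=p-s$. For $y=\langle\frac{s}{p-1}+\frac12\rangle$ I split into cases:
\begin{itemize}
\item if $s<\frac{p-1}{2}$, then $y=\frac{s}{p-1}+\frac12$ and $y_0\equiv\frac{p+1}{2}-s$;
\item if $s>\frac{p-1}{2}$, then $y=\frac{s}{p-1}-\frac12$ and $y_0\equiv\frac{p-1}{2}-s$.
\end{itemize}
In each case I reduce $y_0$ into $\{1,\dots,p\}$ explicitly, adding $p$ when needed.

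Finally, apply \eqref{prod-1} at $x=\frac12$ to get $\Gamma_p(\frac12)^2=(-1)^{(p+1)/2}$. Hence
\begin{align*}
\frac{1}{\Gamma_p(\frac12)}=(-1)^{\frac{p+1}{2}}\Gamma_p\left(\tfrac12\right),
\end{align*}
and the whole left-hand side equals $\pm\Gamma_p(\frac12)\overline{\omega}^s(4)$. The sign is
\begin{align*}
(-1)^{x_0+y_0+\frac{p+1}{2}},
\end{align*}
and it remains to check that this is $-1$ in both cases.

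The main obstacle is this parity bookkeeping. In the case $s<\frac{p-1}{2}$ we have $x_0+y_0=(p-s)+(\frac{p+1}{2}-s)$, whose parity is that of $1+\frac{p+1}{2}$, so the total sign is $(-1)^{1+(p+1)}=-1$. In the case $s>\frac{p-1}{2}$ we have $y_0=\frac{p-1}{2}-s+p$ after reduction, so $x_0+y_0\equiv\frac{p-1}{2}\pmod 2$, and the sign is $(-1)^{\frac{p-1}{2}+\frac{p+1}{2}}=(-1)^p=-1$. Both cases give $-1$, which is the claimed identity.
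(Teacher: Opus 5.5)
Your proof is correct and uses the same ingredients as the paper, namely the $m=2$ multiplication formula and the reflection formula; your parity checks in both cases $s<\frac{p-1}{2}$ and $s>\frac{p-1}{2}$ are right. The only difference is direction: the paper applies duplication to the two $-s$ factors, collapsing them to $\overline{\omega}^s(4)\Gamma_p(\frac{1}{2})\Gamma_p(\langle\frac{-2s}{p-1}\rangle)$, so one reflection against $\Gamma_p(\langle\frac{2s}{p-1}\rangle)$ finishes, whereas your expansion of $\Gamma_p(\langle\frac{2s}{p-1}\rangle)$ needs two reflections plus $\Gamma_p(\frac{1}{2})^2=(-1)^{(p+1)/2}$, which is slightly more bookkeeping.
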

\begin{proof}
Using \eqref{prod-2} for $x=\left\langle\frac{-2s}{p-1}\right\rangle$ and $m=2$ and \eqref{prod-1}, it is straightforward to conclude the identity.
\end{proof}

\section{proof of the main theorems}
In this section, we prove our main results. We begin with the proof of Theorem \ref{theorem-DIK-1}.

\begin{proof}[Proof of theorem \ref{theorem-DIK-1}] Let $P(x,y)=y^2-x^3-3\lambda(x+1)^2.$ Then we have 
\begin{align}\label{eqn-main}
p\cdot(|E^{\dik}_{\lambda}(\F_p)|-1)&=p^2+\sum\limits_{x,y,z\in\F_p, z\neq0}\theta(P(x,y)z)\notag\\
&=p^2-1+\sum_{yz\neq0}\theta(zy^2-3\lambda z)+\sum_{xz\neq0}\theta(-zx^3-3\lambda zx^2-6\lambda zx-3\lambda z)\notag\\
&+\sum_{xyz\neq0}\theta(zy^2-zx^3-3\lambda zx^2-6\lambda zx-3\lambda z)\notag\\
&=p^2-1+S_1+S_2+S_3,
\end{align}
where $S_1=\sum\limits_{yz\neq0}\theta(zy^2-3\lambda z), S_2=\sum\limits_{xz\neq0}\theta(-zx^3-3\lambda zx^2-6\lambda zx-3\lambda z)$ and $S_3=\sum\limits_{xyz\neq0}\theta(zy^2-zx^3-3\lambda zx^2-6\lambda zx-3\lambda z).$
Now, using Lemma \ref{lemma-fuselier} and orthogonality relation (Lemma \ref{orthogonality}) we obtain that $$S_1=1+p\phi(3\lambda).$$ 
Again, using  Lemma \ref{lemma-fuselier} we rearrange the terms and make the transformation $z\rightarrow-z$ to obtain  
\begin{align}\label{S-2}
S_2=\frac{1}{(p-1)^4} \sum_{r,s,t,k=0}^{p-2}g(\overline{\chi}^{r}) g(\overline{\chi}^{s}) g(\overline{\chi}^{t}) g(\overline{\chi}^{k})
    \chi^{s+t+k} (3\lambda ) \chi^t (2)\sum_{x\neq0}\chi^{3r+2s+t}(x)\sum_{z\neq0} \chi^{r+s+t+k} (z).
\end{align}
Also, using Lemma \ref{lemma-fuselier}, orthogonality relation and making the transformation $z\rightarrow-z,$ we have 
\begin{align}\label{S-3}
S_3&=-S_2+\frac{1}{(p-1)^4} \sum_{r,s,t,k=0}^{p-2}  g(\overline{\chi}^{r})g(\overline{\chi}^{s}) g(\overline{\chi}^{t}) g(\overline{\chi}^{k}) g(\phi)\chi^t(2)  \chi^{s+t+k} (3\lambda) \phi(-1) \notag\\ 
&\times \sum_{z\neq0} \phi\chi^{r+s+t+k} (z)    \sum_{x\neq0} \chi^{3r+2s+t} (x).
\end{align}
The summation present in $S_3$ is nonzero only if $r+s+t+k+\frac{p-1}{2}\equiv0\pmod{p-1}$ and $3r+2s+t\equiv0\pmod{p-1}.$ Solving these two we have  $k =2r+s+\frac{p-1}{2}$ and substituting this in \eqref{S-3} we have 
\begin{align}\label{S-3-final}
S_3&= -S_2+ \frac{\phi(-3\lambda)g(\phi)}{(p-1)^2} \sum_{r,s=0}^{p-2} g(\overline{\chi}^{r}) g(\overline{\chi}^{s}) g(\chi^{3r+2s}) g\left(\overline{\chi}^{2r + s + \frac{p-1}{2}}\right) \overline{\chi}^{3r+2s}(2)\overline{\chi}^{r}(3\lambda)\notag\\
&=-S_2+S_3',
\end{align}
where $S_3'=\frac{\phi(-3\lambda)g(\phi)}{(p-1)^2} \sum\limits_{r,s=0}^{p-2} g(\overline{\chi}^{r}) g(\overline{\chi}^{s}) g(\chi^{3r+2s}) g\left(\overline{\chi}^{2r + s + \frac{p-1}{2}}\right) \overline{\chi}^{3r+2s}(2)\overline{\chi}^{r}(3\lambda).$
Now, replacing $\chi$ by Teichm\"{u}ller character $\overline{\omega}$ in the expression of $S_3'$ and applying Gross-Koblitz formula we have 
\begin{align}
S_3'&=\frac{(-p)^{-\frac{1}{2}}g(\phi) \phi(-3\lambda)}{(p-1)^2} \sum_{r,s=0}^{p-2}(-p)^{-\left\lfloor\frac{-r}{p-1}\right\rfloor-\left\lfloor\frac{-s}{p-1}\right\rfloor-\left\lfloor\frac{3r+2s}{p-1}\right\rfloor-\left\lfloor\frac{-2r-s}{p-1}-\frac{1}{2}\right\rfloor}\Gamma_p\left(\left\langle\frac{-r}{p-1}\right\rangle\right) \Gamma_p\left(\left\langle\frac{-s}{p-1}\right\rangle\right)\notag \\
&\times \Gamma_p\left(\left\langle\frac{3r+2s}{p-1}\right\rangle\right)\Gamma_p\left(\left\langle\frac{-2r-s}{p-1}-\frac{1}{2}\right\rangle\right){\omega}^{3r+2s}(2){\omega}^{r}(3\lambda). \notag
\end{align}
Transforming $s$ by $s-r$, we have
\begin{align}\label{eq-connect}
S_3'&=\frac{(-p)^{-\frac{1}{2}}g(\phi) \phi(-3\lambda)}{(p-1)^2} 
\sum_{r,s=0}^{p-2}(-p)^{-\left\lfloor\frac{-r}{p-1}\right\rfloor-\left\lfloor\frac{r-s}{p-1}\right\rfloor-\left\lfloor\frac{r+2s}{p-1}\right\rfloor-\left\lfloor\frac{-r-s}{p-1}-\frac{1}{2}\right\rfloor}\Gamma_p\left(\left\langle\frac{-r}{p-1}\right\rangle\right)\notag\\ 
  &\Gamma_p\left(\left\langle\frac{-s+r}{p-1}\right\rangle\right)\Gamma_p\left(\left\langle\frac{r+2s}{p-1}\right\rangle\right)\Gamma_p\left(\left\langle\frac{-r-s}{p-1}-\frac{1}{2}\right\rangle\right){\omega}^{r+2s}(2) {\omega}^{r}(3\lambda).
\end{align}
To simplify further, if we use \eqref{prod-2} for $x=\frac{r+2s}{p-1}$ and $m=2$ then
 $$\Gamma_p\left(\left\langle\frac{r+2s}{p-1}\right\rangle\right)\Gamma_p\left(\frac{1}{2}\right)=\overline{\omega}(2^{r+2s})\Gamma_p\left(\left\langle\frac{r}{2(p-1)}+\frac{s}{p-1}\right\rangle\right)\Gamma_p\left(\left\langle\frac{r}{2(p-1)}+\frac{s}{p-1}+\frac{1}{2}\right\rangle\right).$$
 Substituting this in \eqref{eq-connect} and using Lemma \ref{exponent-1} we may write
  \begin{align}
 S_3'&=\frac{(-p)^{-\frac{1}{2}}g(\phi) \phi(-3\lambda)}{(p-1)^2\Gamma_p\left(\frac{1}{2}\right)} \sum_{r,s=0}^{p-2}(-p)^{-\left\lfloor\frac{-r}{p-1}\right\rfloor-\left\lfloor\frac{r-s}{p-1}\right\rfloor-\left\lfloor\frac{r}{2(p-1)}+\frac{s}{p-1}\right\rfloor-\left\lfloor\frac{r}{2(p-1)}+\frac{s}{p-1}+\frac{1}{2}\right\rfloor-\left\lfloor\frac{-r-s}{p-1}-\frac{1}{2}\right\rfloor}
 {\omega}^{r}(3\lambda)\notag\\
 &\Gamma_p\left(\left\langle\frac{-r}{p-1}\right\rangle\right)\Gamma_p\left(\left\langle\frac{-s+r}{p-1}\right\rangle\right)\Gamma_p\left(\left\langle\frac{r}{2(p-1)}+\frac{s}{p-1}\right\rangle\right)\Gamma_p\left(\left\langle\frac{r}{2(p-1)}+\frac{s}{p-1}+\frac{1}{2}\right\rangle\right)\notag\\&\Gamma_p\left(\left\langle\frac{-r-s}{p-1}-\frac{1}{2}\right\rangle\right)
   \frac{\Gamma_p\left(\left\langle\frac{r}{p-1}\right\rangle\right)\Gamma_p\left(\left\langle\frac{r}{2(p-1)}\right\rangle\right)\Gamma_p\left(\left\langle\frac{r}{2(p-1)}+\frac{1}{2}\right\rangle\right) \Gamma_p\left(\left\langle\frac{-r}{p-1} -\frac{1}{2}\right\rangle\right)}{\Gamma_p\left(\left\langle\frac{r}{p-1}\right\rangle\right)\Gamma_p\left(\left\langle\frac{r}{2(p-1)}\right\rangle\right)\Gamma_p\left(\left\langle\frac{r}{2(p-1)}+\frac{1}{2}\right\rangle\right) \Gamma_p\left(\left\langle\frac{-r}{p-1} -\frac{1}{2}\right\rangle\right)}.\notag
 \end{align}
 Now, using \eqref{relation-1} in the last expression we reduce it to
  \begin{align}
 S_3'&=  \frac{p^2\phi(-3\lambda)}{(p-1)^2} \sum_{r,s=0}^{p-2}(-p)^{\left\lfloor-\frac{1}{2}-\frac{r}{p-1}\right\rfloor-\left\lfloor\frac{-r}{p-1}\right\rfloor-\left\lfloor\frac{r-s}{p-1}\right\rfloor-\left\lfloor\frac{r}{2(p-1)}+\frac{s}{p-1}\right\rfloor-\left\lfloor\frac{r}{2(p-1)}+\frac{s}{p-1}+\frac{1}{2}\right\rfloor-\left\lfloor\frac{-r-s}{p-1}-\frac{1}{2}\right\rfloor}\notag \\
 &\times\Gamma_p\left(\left\langle\frac{r}{p-1}\right\rangle\right)
  \Gamma_p\left(\left\langle\frac{-r}{p-1}\right\rangle\right)
  \frac{\Gamma_p\left(\left\langle\frac{-s+r}{p-1}\right\rangle\right)}{\Gamma_p\left(\left\langle\frac{r}{p-1}\right\rangle\right)}
 \frac{ \Gamma_p\left(\left\langle\frac{r}{2(p-1)}+\frac{s}{p-1}\right\rangle\right)}{\Gamma_p\left(\left\langle\frac{r}{2(p-1)}\right\rangle\right)}\notag\\
 &\times
 \frac{\Gamma_p\left(\left\langle\frac{r}{2(p-1)}+\frac{s}{p-1}+\frac{1}{2}\right\rangle\right)}{\Gamma_p\left(\left\langle\frac{r}{2(p-1)}+\frac{1}{2}\right\rangle\right)}
 \frac{\Gamma_p\left(\left\langle\frac{-r-s}{p-1}-\frac{1}{2}\right\rangle\right)}{\Gamma_p\left(\left\langle\frac{-r}{p-1} -\frac{1}{2}\right\rangle\right)}{\omega}^r(-6\lambda) \binom{\overline{\omega}^r}{\overline{\omega}^r\phi}.
 \end{align}
 If we use Lemma \ref{exponent-3} and \eqref{prod-1} in the above expression for $r\neq0$, then simplifying the terms we have 
  \begin{align}
 S_3'&=\frac{p\phi(-3\lambda)}{(p-1)^2}\sum_{s=0}^{p-2}(-p)^{-1-\left\lfloor\frac{-s}{p-1}\right\rfloor-\left\lfloor\frac{s}{p-1}\right\rfloor-\left\lfloor\frac{s}{p-1}+\frac{1}{2}\right\rfloor-\left\lfloor-\frac{1}{2}-\frac{s}{p-1}\right\rfloor}\Gamma_p\left(\left\langle\frac{-s}{p-1}\right\rangle\right)\\&\Gamma_p\left(\left\langle\frac{s}{p-1}\right\rangle\right)\Gamma_p\left(\left\langle\frac{s}{p-1}+\frac{1}{2}\right\rangle\right)\Gamma_p\left(\left\langle\frac{-s}{p-1}-\frac{1}{2}\right\rangle\right) -
 \frac{p^2\phi(-3\lambda)}{(p-1)^2} \sum_{r=1}^{p-2}{\omega}^r(6\lambda) \binom{\overline{\omega}^r}{\overline{\omega}^r\phi}\notag\\&\times \sum_{s=0}^{p-2}(-p)^{1-\left\lfloor\frac{r-s}{p-1}\right\rfloor-\left\lfloor\frac{r}{2(p-1)}+\frac{s}{p-1}\right\rfloor-\left\lfloor\frac{r}{2(p-1)}+\frac{s}{p-1}+\frac{1}{2}\right\rfloor-\left\lfloor\left\langle-\frac{1}{2}-\frac{r}{p-1}\right\rangle-\frac{s}{p-1}\right\rfloor}\notag\\
 &\times
  \frac{\Gamma_p\left(\left\langle\frac{-s+r}{p-1}\right\rangle\right)}{\Gamma_p\left(\left\langle\frac{r}{p-1}\right\rangle\right)}
 \frac{ \Gamma_p\left(\left\langle\frac{r}{2(p-1)}+\frac{s}{p-1}\right\rangle\right)}{\Gamma_p\left(\left\langle\frac{r}{2(p-1)}\right\rangle\right)}
   \frac{\Gamma_p\left(\left\langle\frac{r}{2(p-1)}+\frac{s}{p-1}+\frac{1}{2}\right\rangle\right)}{\Gamma_p\left(\left\langle\frac{r}{2(p-1)}+\frac{1}{2}\right\rangle\right)}
 \frac{\Gamma_p\left(\left\langle\frac{-r-s}{p-1}-\frac{1}{2}\right\rangle\right)}{\Gamma_p\left(\left\langle\frac{-r}{p-1} -\frac{1}{2}\right\rangle\right)}.\notag
\end{align} 
	Furthermore, using Lemma \ref{relation-2} we have
	\begin{align}\label{eqn-100}
	 S_3'&=\frac{-p(p-2)\phi(3\lambda)}{p-1}-\frac{p^3\phi(-3\lambda)}{p-1}\sum\limits_{r=1}^{p-2}\omega^r(6\lambda)
	 \binom{\overline{\omega}^r}{\overline{\omega}^r\phi}{}_2G_2\left[\begin{array}{cc}
  \frac{r}{p-1}   & -\frac{1}{2} -\frac{r}{p-1}  \\
  \frac{-r}{2(p-1)}    & -\frac{r}{2(p-1)} -\frac{1}{2} \end{array}|\ 1 \right]_p.
	\end{align}
    	Finally substituting \eqref{eqn-100} into \eqref{S-3-final} and then using the expression for $S_1$ in \eqref{eqn-main}
we derive the result.	
\end{proof}

\begin{proof}[Proof of Theorem \ref{theorem-DIK-2}]
 Recalling \eqref{eq-connect} we have 
\begin{align}\label{eqn-1000}
S_3'&=\frac{(-p)^{-\frac{1}{2}}g(\phi) \phi(-3\lambda)}{(p-1)^2} 
\sum_{r,s=0}^{p-2}(-p)^{-\left\lfloor\frac{-r}{p-1}\right\rfloor-\left\lfloor\frac{r-s}{p-1}\right\rfloor-\left\lfloor\frac{r+2s}{p-1}\right\rfloor-\left\lfloor\frac{-r-s}{p-1}-\frac{1}{2}\right\rfloor}\Gamma_p\left(\left\langle\frac{-r}{p-1}\right\rangle\right)\notag\\ 
  &\Gamma_p\left(\left\langle\frac{-s+r}{p-1}\right\rangle\right)\Gamma_p\left(\left\langle\frac{r+2s}{p-1}\right\rangle\right)\Gamma_p\left(\left\langle\frac{-r-s}{p-1}-\frac{1}{2}\right\rangle\right){\omega}^{r+2s}(2) {\omega}^{r}(3\lambda).
\end{align}
Now, substituting \eqref{eqn-1000} into \eqref{S-3-final} and then using the expression for $S_1$ in \eqref{eqn-main} we have 
\begin{align}\label{eqn-1001}
a_p(E^{\dik}_{\lambda})&=-\phi(3\lambda) -
\frac{(-p)^{-\frac{1}{2}}g(\phi) \phi(-3\lambda)}{p(p-1)^2} 
\sum_{r,s=0}^{p-2}(-p)^{-\left\lfloor\frac{-r}{p-1}\right\rfloor-\left\lfloor\frac{r-s}{p-1}\right\rfloor-\left\lfloor\frac{r+2s}{p-1}\right\rfloor-\left\lfloor\frac{-r-s}{p-1}-\frac{1}{2}\right\rfloor}\Gamma_p\left(\left\langle\frac{-r}{p-1}\right\rangle\right)\notag\\ 
  &\Gamma_p\left(\left\langle\frac{-s+r}{p-1}\right\rangle\right)\Gamma_p\left(\left\langle\frac{r+2s}{p-1}\right\rangle\right)\Gamma_p\left(\left\langle\frac{-r-s}{p-1}-\frac{1}{2}\right\rangle\right){\omega}^{r+2s}(2) {\omega}^{r}(3\lambda).
\end{align}
It is easy to see that, for $0\leq s< \frac{p-1}{2}$, we have $\left\lfloor\frac{2s}{p-1}+\frac{r}{p-1}\right\rfloor=\left\lfloor\left\langle\frac{2s}{p-1}\right\rangle+\frac{r}{p-1}\right\rfloor$ and for $\frac{p-1}{2}\leq s\leq p-2$ we have 
$\left\lfloor\frac{2s}{p-1}+\frac{r}{p-1}\right\rfloor=1+\left\lfloor\left\langle\frac{2s}{p-1}\right\rangle+\frac{r}{p-1}\right\rfloor.$
Now, using these along with Lemma \ref{exponent-2}, \eqref{prod-1} and Lemma \ref{lemma-100} in \eqref{eqn-1001}, we have 
\begin{align}\label{eqn-1002}
a_p(E^{\dik}_{\lambda})&=-\frac{p+1}{p-1}\phi(3\lambda)-\frac{p\phi(3\lambda)\phi(1-6\lambda)}{p-1}-\frac{p\phi(2)\phi(6\lambda-1)}{p-1}-\frac{p\phi(3\lambda)}{p-1} \notag\\
&\times \sum_{s=1,s\neq \frac{p-1}{2}}^{p-2}
{_2G_2}\left[\begin{array}{cc}
  0   & -\frac{1}{2} -\frac{s}{p-1} \\
  \frac{s}{(p-1)}    & -\frac{2s}{(p-1)} \end{array}|\ \frac{1}{6\lambda} \right]_p.
\end{align}
Now, multiplying both sides of \eqref{eqn-1002} by $(1-p)\phi(3\lambda)$ we complete the proof.
\end{proof}

\begin{proof}[Proof of Theorem \ref{theorem-summation-1}]
Consider the elliptic curve given by $$E: y^2=x^3+ax+b,$$ where $a=6\lambda-3\lambda^2$ and $b=2\lambda^3-6\lambda^2+3\lambda$ such that $\lambda\in\F_p^{\times}$ and $\lambda\neq2,\frac{3\pm\sqrt{3}}{2}.$ If we make the following transformation $x\rightarrow x+\lambda$ and $y\rightarrow y$, then we have the equivalent curve $$  E^{\dik}_{\lambda} : y^2 = x^3 + 3\lambda(x + 1)^2 .$$ Therefore, we have 
\begin{align}\label{eq-1021}
|E(\F_p)|=| E^{\dik}_{\lambda}(\F_p)|.
\end{align} If we use \cite[Theorem 1.2]{McCarthyPJM}, then we have 
\begin{align}\label{eq-1020}
a_p(E)=\phi(2\lambda^3-6\lambda^2+3\lambda)\cdot p\cdot {_2G_2}\left[\begin{array}{cc}
  \frac{1}{4}   & \frac{3}{4}  \\
  \frac{1}{3}    & \frac{2}{3}  \end{array}|\frac{(2\lambda^2-6\lambda+3)^2}{4\lambda(\lambda-2)^3}\right]_p,
\end{align}
where $a_p(E)=p+1-|E(\F_p)|.$ Finally, using Theorem \ref{theorem-DIK-1}, \eqref{eq-1020} and \eqref{eq-1021} we deduce the result.
\end{proof}

\begin{proof}[Proof of Theorem \ref{theorem-summation-2}]
The proof of the theorem is same as the proof of Theorem \ref{theorem-summation-1}. To obtain the identity we use Theorem \ref{theorem-DIK-2}, \eqref{eq-1020} and \eqref{eq-1021}.

\end{proof}

\begin{proof}[Proof of Theorem \ref{theorem-Jacobi}] We first prove the first equality. To do this, we 
employ similar method as use in proof of Theorem \ref{theorem-DIK-1} and derive that
\begin{align}
 -p(a_p(E_\lambda^{\jac}))&=p+\frac{g(\phi)\phi(-1)}{(p-1)^3}\sum_{r,s,t=0}^{p-2}g(T^{-r})g(T^{-s})g(T^{-t})T^s(2\lambda)\sum_{z}\phi T^{r+s+t}(z)\sum_{x}T^{4r+2s}(x).\notag
\end{align}
Using orthogonality of multiplicative characters it is easy to see that the last sum present in the above identity is nonzero if  $s=-2r$ and $s=-2r+\frac{p-1}{2}.$ Substituting these values in the above identity we have 
\begin{align}\label{eqn-1004}
-p\cdot a_p(E^{\jac}_{\lambda})&=p+A_1+A_2,
\end{align}
\noindent where $$A_1=\frac{g(\phi)\phi(-1)}{(p-1)^2}\sum\limits_{r,t=0}^{p-2}g(T^{-r})g(T^{2r})g(T^{-t})T^{-2r}(2\lambda)\sum_{z}T^{t-r}(z)\phi(z)$$ and 
where $$A_2=\frac{g(\phi)\phi(-2\lambda)}{(p-1)^2}\sum_{r,t=0}^{p-2}g(T^{-r})g(T^{2r}\phi)g(T^{-t})T^{-2r}(2\lambda)\sum_{z\in \mathbb F_p^*}T^{t-r}(z).$$
Now, using orthogonality relation of multiplicative characters and then applying Hasse-Davenport relation in the expression of $A_1,$ we deduce that $$A_1=-2p.$$
\noindent Similarly, using orthogonality relation of multiplicative characters we obtain
\begin{align}\label{eqn-1005}
  A_2&=\frac{1}{(p-1)}\sum_{r=0}^{p-2}g(T^{-r})g(T^{2r}\phi)g(T^{-r})g(\phi)\phi(-2\lambda)T^{r}\left(\frac{1}{4\lambda^2}\right).
\end{align}
Again, by making use of Hasse-Davenport formula we have $$g(T^{2r}\phi)=\frac{g(T^{4r}){T}^{-4r}(2)g(\phi)}{g(T^{2r})}.$$
Substituting this into \eqref{eqn-1005} and using \eqref{inverse} we have 
\begin{align}\label{eqn-1006}
  A_2&=\frac{p\phi(2\lambda)}{(p-1)}\sum_{r=0}^{p-2}g(T^{-r})^2\frac{g(T^{4r})}{g(T^{2r})}T^{r}\left(\frac{1}{2^6\lambda^2}\right).
\end{align}
Now, replacing $T$ by $\overline{\omega}$ and applying Gross-Koblitz formula we have 
\begin{align}\label{eqn-1010}
A_2=\frac{p \phi(2\lambda)}{(p-1)}\sum_{r=0}^{p-2}(-p)^{-\left\lfloor\frac{-r}{p-1}\right\rfloor-\left\lfloor\frac{-r}{p-1}\right\rfloor-\left\lfloor\frac{4r}{p-1}\right\rfloor+\left\lfloor\frac{2r}{p-1}\right\rfloor}
\frac{ \Gamma_p\left(\left\langle\frac{-r}{p-1}\right\rangle\right) \Gamma_p\left(\left\langle\frac{-r}{p-1}\right\rangle\right)\Gamma_p\left(\left\langle\frac{4r}{p-1}\right\rangle\right)\overline{\omega}^{r}(\frac{1}{2^6\lambda^2})}{\Gamma_p\left(\left\langle\frac{2r}{p-1}\right\rangle\right)}.
\end{align}
Now, using the product formula given in \eqref{prod-3} we have 
\begin{align}\label{eqn-1007}
   \Gamma_p\left(\left\langle\frac{4r}{p-1}\right\rangle\right)=\frac{\Gamma_p\left(\left\langle\frac{r}{p-1}\right\rangle\right)\Gamma_p\left(\left\langle\frac{r}{p-1}+\frac{1}{4}\right\rangle\right)\Gamma_p\left(\left\langle\frac{r}{p-1}+\frac{1}{2}\right\rangle\right)\Gamma_p\left(\left\langle\frac{r}{p-1}+\frac{3}{4}\right\rangle\right) {\overline\omega^r(4^4)}}{\Gamma_p\left(\left\langle\frac{1}{4}\right\rangle\right)\Gamma_p\left(\left\langle\frac{1}{2}\right\rangle\right)\Gamma_p\left(\left\langle\frac{3}{4}\right\rangle\right)} 
\end{align}
and
\begin{align}\label{eqn-1008}
   \Gamma_p\left(\left\langle\frac{2r}{p-1}\right\rangle\right)=\frac{\Gamma_p\left(\left\langle\frac{r}{p-1}\right\rangle\right)\Gamma_p\left(\left\langle\frac{r}{p-1}+\frac{1}{2}\right\rangle\right) {\overline\omega^r(4)}}{\Gamma_p\left(\left\langle\frac{1}{2}\right\rangle\right)}.
\end{align}
Furthermore, it is easy to see that for $0\leq r\leq p-2$ we have
\begin{equation}\label{eqn-1009}
    \left\lfloor\frac{4r}{p-1}\right\rfloor-\left\lfloor\frac{2r}{p-1}\right\rfloor=\left\lfloor\frac{1}{4}+\frac{r}{p-1}\right\rfloor+\left\lfloor\frac{3}{4}+\frac{r}{p-1}\right\rfloor.
\end{equation}
Substituting \eqref{eqn-1007} , \eqref{eqn-1008} and \eqref{eqn-1009} into \eqref{eqn-1010} and taking the transformation $r\rightarrow-r$ we deduce that
\begin{align}\label{eqn-1011}
A_2=-p\cdot  \phi(2\lambda)\cdot {_2G_2}\left[\begin{array}{cc}
 \frac{1}{4}    & \frac{3}{4} \\
  0    & 0  \end{array}|{\lambda^2}\right]_p.
\end{align}
Finally, substituting the values of $A_1$ and $A_2$ into \eqref{eqn-1004} we have the first equality. To obtain the second equality, we make the transformation $r\rightarrow -r-\frac{p-1}{2}$ in \eqref{eqn-1006} and proceeding similar steps as in the proof of the first equality we complete the proof.
\end{proof}

\begin{proof}[Proof of Theorem \ref{theorem-Jacobi trans}]
Consider the Jacobi curve $$E^{\jac}_{\lambda}: v^2=u^4+2\lambda u^2+1,$$ where $\lambda\in\F_p^{\times}$ such that $\lambda\neq\pm1.$  For each point $(u,v)\in E^{\jac}_{\lambda}(\F_p)$ such that $u\neq0$ we have that $(x,y)= \left(\frac{2(v+1)}{u^2},\frac{4(1+v+\lambda u^2)}{u^3}\right)$ is a point on the curve $E'$ given by $$E': y^2=x^3+2\lambda x^2-4x-8\lambda.$$ Moreover, for $u=0,$ there are two points on the curve $E^{\jac}_{\lambda}$ that do not correspond to any points on the curve $E'.$
Conversely, for any point $(x,y)\in E'(\F_p)$ such that $x\neq-2\lambda$ and $y\neq0,$ we have that $(u,v)=\left(\frac{2(x+2\lambda)}{y}, \frac{2(x+2\lambda)^2x}{y^2}-1\right)$ is a point on the curve $E^{\jac}_{\lambda}.$
Furthermore, if $x=-2\lambda,$ then $y=0$ and there are three points on the curve $E',$ namely $(-2\lambda,0),(\pm2,0)$ that do not correspond to any point of the curve $E^{\jac}_{\lambda}.$
Furthermore, if we use the change of variables $(x,y)\rightarrow({x-2\lambda},y)$ in the curve $E'$, then it is equivalent to the elliptic curve $E''$ given by 
$$E'':y^2=x^3-4\lambda x^2+(4\lambda ^2-4)x.$$
Using all the above arguments we conclude that $|E^{\jac}_{\lambda}(\F_p)|-2=|E''(\F_p)|-3.$ This yields,
\begin{align}\label{eqn-1024}
 a_p(E^{\jac}_\lambda)=a_p(E'')+1.
 \end{align}
 Now, if we use \cite[Theorem 3.5]{BS}, for $\lambda\neq0\pm1,\pm\sqrt{2}$ then we have 
\begin{align}\label{eqn-1023}
a_p(E'')=p\phi(1-\lambda^2){_2G_2}\left[\begin{array}{cc}
  \frac{1}{2}   & \frac{1}{2}  \\
  \frac{1}{4}    & \frac{3}{4}  \end{array}|\frac{\lambda^2-1}{\lambda^2}\right]_p.
\end{align}
Moreover, it is easy to see that 
\begin{align}\label{eqn-inverse}
{_2G_2}\left[\begin{array}{cc}
  \frac{1}{2}   & \frac{1}{2}  \\
  \frac{1}{4}    & \frac{3}{4}  \end{array}|\frac{\lambda^2-1}{\lambda^2}\right]_p={_2G_2}\left[\begin{array}{cc}
\frac{1}{4}   & \frac{3}{4} \\
  \frac{1}{2}    & \frac{1}{2}  \end{array}|\frac{\lambda^2}{\lambda^2-1}\right]_p.
  \end{align}
Finally, combining Theorem \ref{theorem-Jacobi}, \eqref{eqn-1024}, \eqref{eqn-1023} and \eqref{eqn-inverse} we deduce the result.
\end{proof}
\begin{proof}[Proof of Theorem \ref{Hessian-trans}]
Here, we consider the Hessian form of elliptic curve given by $$E^{\hes}_d:u^3+v^3+1-3duv=0,$$ where 
$d\neq-2,0$ and $d^3\neq1$ such that $\frac{d^2+d+1}{3(d+2)}$ is a square in $\F_p^{\times}.$ Now,  applying \cite[Theorem 3.3]{BS1}, we have 
\begin{align}\label{eq-1024}
\#E^{\hes}_d(\F_p)=\gamma-1+p-p\phi(-3d){}_2G_2\left[\begin{array}{cc}
  \frac{1}{2}   & \frac{1}{2}  \\
  \frac{1}{6}    & \frac{5}{6}  \end{array}|\frac{1}{d^3}\right]_p,
\end{align}
where $\gamma=\begin{cases}
    5-6\phi(-3), & \text{if} \ p \equiv 1 \pmod{3},\\1, &  \text{if}\ p\not\equiv 1 \pmod{3},
\end{cases}$

\noindent and $\#E^{\hes}_d(\F_p)$ denotes the number of $\F_p$-rational points of the curve $E^{\hes}_d.$ Now, if $u+v\neq-d,$ and $(u,v)\in E^{\hes}_d(\F_p),$ then $(x,y)=\left(-\frac{k(u+v+1)}{u+v+d},-\frac{k(d-1)(u-v)}{u+v+d}\right),$ is a point on the curve 
$$\widetilde{E}:y^2=x^3+\{(d+2)x+k\}^2,$$ where $k=\frac{4}{3}(d^2+d+1).$  Moreover, we can easily verify that if $u+v=-d,$ then $(u,v)$ is not a point on the Hessian curve $E^{\hes}_d.$ 
Conversely, if $(x,y)\in \widetilde{E}(\F_p)$ such that $x\neq-k,$ then $(u,v)=\left(\frac{\frac{d}{8}\alpha x+\frac{1}{8}\alpha y+\frac{1}{2}\beta}{-\frac{1}{4}\alpha x-\beta},\frac{\frac{d}{8}\alpha x-\frac{1}{8}\alpha y+\frac{1}{2}\beta}{-\frac{1}{4}\alpha x-\beta}\right)$ is a point on the curve $E^{\hes}_d$, where $\alpha=\frac{3}{d^3-1}\  \text{and} \  \beta=\frac{1}{d-1}.$

\noindent Furthermore, for $x=-k,$ there are $N_0$ number of points on the curve $\widetilde{E}$ that do not correspond to any point on the curve $E^{\hes}_d,$ where 
$$N_0=\begin{cases}
    2, & \text{if} \ p \equiv 1,-5 \pmod{12},\\0, &  \text{if}\ p\not\equiv 1,-5 \pmod{12}.
\end{cases}$$
\noindent Therefore, we have the following relation
\begin{align}\label{eqn-1025}
\#E^{\hes}_d(\F_p)=\#\widetilde{E}(\F_p)-N_0,
\end{align}
where $\#\widetilde{E}(\F_p)$ denotes the number of $\F_p$-rational points of the curve $\widetilde{E}.$
Furthermore, by making the transformation $(x,y)\rightarrow(\frac{k}{d+2}x, \frac{k}{d+2}\sqrt{\frac{k}{d+2}}\ y)$ in the curve $\widetilde{E}$ we obtain that $\widetilde{E}$ is equivalent to the curve $E^{\dik}_{\lambda},$ where $\lambda=\frac{(d+2)^3}{3k}.$ This yields,
\begin{align}\label{eqn-5000}
\# E^{\dik}_{\lambda}(\F_p)=\#\widetilde{E}(\F_p).
\end{align}
Hence, using \eqref{eqn-1025} into \eqref{eqn-5000} we have,
$\#E^{\hes}_d(\F_p)=\#{E^{\dik}_\lambda}(\F_p)-N_0.\notag
$
This yields, 
\begin{align}\label{eqn-1026}
 \#E^{\hes}_d(\F_p)=p-a_p(E^{\dik}_\lambda)-N_0.
 \end{align}
Finally, combining Theorem \ref{theorem-DIK-1}, \eqref{eq-1024} and \eqref{eqn-1026} we deduce the result.
\end{proof}

\begin{proof}[Proof of Theorem \ref{Hessian-trans2}]
Proceeding similar steps as in the proof of Theorem \ref{Hessian-trans} and 
combining Theorem \ref{theorem-DIK-2}, \eqref{eq-1024} and \eqref{eqn-1026} we derive the result.
\end{proof}
\section{Acknowledgement}
The second author is supported by Science and Engineering Research Board/Anusandhan National Research Foundation [CRG/2023/003037].

\end{document}